\documentclass{amsart}
\setlength\topmargin{0mm}
\setlength\textheight{230mm}
\setlength\oddsidemargin{5mm}
\setlength\evensidemargin{5mm}
\setlength\textwidth{150mm}
\usepackage[dvipdfmx]{graphicx}
\usepackage[dvipdfmx]{color}
\usepackage{enumerate}

\definecolor{RedOrange}{cmyk}{ 0, 0.77, 0.87, 0}
\definecolor{RoyalPurple}{cmyk}{ 0.84, 0.53, 0, 0}
\definecolor{YellowGreen}{cmyk}{ 0.44, 0, 0.74, 0}
\definecolor{Fuchsia}{cmyk}{ 0.47, 0.91, 0, 0.08}
\definecolor{Blue}{cmyk}{ 0.84, 0.53, 0, 0}
\definecolor{BlueViolet}{cmyk}{ 0.84, 0.53, 0, 0}
\definecolor{Black}{cmyk}{ 0.75, 0.68, 0.67, 0.9}
\usepackage{xcolor}
\usepackage{verbatim}
\usepackage{amsmath}
\usepackage{amssymb, mathrsfs}
\usepackage{amsbsy}
\usepackage{amscd}
\usepackage{amsthm}
\usepackage{bm}

\newcommand{\N}{\mathbb{N}}
\newcommand{\R}{\mathbb{R}}
\newcommand{\Z}{\mathbb{Z}}

\newcommand{\G}{\mathbb{G}}
\newcommand{\E}{\mathbb{E}}

\newcommand{\sgn}{\mathop{\mathrm{sgn}}\nolimits}
\renewcommand{\P}{\mathbb{P}}

\newtheorem{cor}{Corollary}
\newtheorem{thm}{Theorem}
\newtheorem{lem}{Lemma}
\newtheorem{Def}{Definition}
\theoremstyle{definition}
\newtheorem{remark}{Remark}
\usepackage{color}
\definecolor{lilas}{RGB}{182, 102, 210}

\numberwithin{equation}{section}
\begin{document}
\title{On properties of optimal paths in First-passage percolation}
\date{\today}
\author{Shuta Nakajima} 
\address[Shuta Nakajima]
{Research Institute in Mathematical Sciences, 
Kyoto University, Kyoto, Japan}
\email{njima@kurims.kyoto-u.ac.jp}

\keywords{random environment, first passage percolation.}
\subjclass[2010]{Primary 60K37; secondary 60K35; 82A51; 82D30}

\begin{abstract}
  
  In this paper, we study some properties of optimal paths in the first passage percolation on $\Z^d$ and show the following: (1) the number of optimal paths has an exponential growth if the distribution has an atom; (2) the means of intersection and union of optimal paths are linear in the distance. For the proofs, we use the resampling argument introduced in [J. van den Berg and H. Kesten. Inequalities for the time constant in first-passage percolation. Ann. Appl. Probab. 56-80, 1993] with suitable adaptions.
\end{abstract}

\maketitle

\section{Introduction}
We consider the first passage percolation (FPP) on the lattice $\Z^d$ with $d\geq{}2$. The model is defined as follows. The vertices are the elements of $\Z^d$. Let us denote by $E^d$  the set of edges:
$$E^d=\{\langle v,w \rangle|~v,w\in\Z^d,~|v-w|_1=1\},$$
where we set $|v-w|_1=\sum^d_{i=1}|v_i-w_i|$ for $v=(v_1,\cdots,v_d)$, $w=(w_1,\cdots,w_d)$. Note that we consider non--oriented edges in this paper, i.e. $\langle v,w \rangle=\langle w,v \rangle$ and we sometimes regard $\langle v,w \rangle$ as a subset of $\Z^d$ with some abuse of notation.

We assign a non-negative random variable $\tau_e$ to each edge $e\in E^d$, called the passage time of $e$. The collection $\tau=\{\tau_e\}_{e\in E^d}$ is assumed to be independent and identically distributed with common distribution function $F$. A path $\Gamma$ is a finite sequence of vertices $(x_1,\cdots,x_l)\subset\Z^d$ such that for any $i\in\{1,\cdots,l-1\}$, $\{x_i,x_{i+1}\}\in E^d$. It is convenient to regard a path as a subset of edges in the following way:
\begin{equation}\label{edge-vertex}
  \Gamma=(x_i)^l_{i=1}=(\{x_{i},x_{i+1}\})^{l-1}_{i=1}.
\end{equation}
  Given a path $\Gamma$, we define the passage time of $\Gamma$ as
$$t(\Gamma)=\sum_{e\in\Gamma}\tau_e.$$
 Given two vertices $v,w\in\Z^d$, we define the {\em first passage time} from $v$ to $w$ as
$$t(v,w)=\inf_{\Gamma:v\to w}t(v,w),$$
 where the infimum is over all finite paths $\Gamma$ starting at $v$ and ending at $w$. A path from $v$ to $w$ is said to be optimal if it attains the first passage time $t(v,w)$, i.e. $t(\Gamma)=t(v,w)$. Denote by $\mathbb{O}(v,w)$ the set of all self--avoiding optimal paths from $v$ to $w$. (Since if the distribution $F$ has an atom at $0$, the number of optimal paths can be infinity, we only consider self--avoiding optimal paths in this paper.) Hereafter, we simply call them optimal paths.

  \begin{Def}[\cite{BK}]
    A distribution $F$ is said to be {\em useful} if $\E[\tau_e]<\infty$ and 
    $$F(F^-)<
    \begin{cases}
    p_c(d) & \text{if $F^-=0$} \\
    \vec{p}_c(d)& \text{otherwise},
    \end{cases}$$
     where $p_c(d)$ and $\vec{p}_c(d)$ stand for the critical probability for $d$--dimensional percolation and oriented percolation model, respectively and $F^-$ is the infimum of the support of $F$.
  \end{Def}
  Note that if $F$ is continuous with a finite first moment, i.e. $\P(\tau_e=a)=0$ for any $a\in\R$ and $\E \tau_e<\infty$, then it is useful. It is well--known that if $F$ is useful, then for any $v,w\in\Z^d$ there exists an optimal path from $v$ to $w$, i.e. $\sharp\mathbb{O}(v,w)\ge 1$, with probability one (see, e.g. \cite{Kes86}. One can see it from Lemma~\ref{useful} therein). It is worth noting that this problem is open for general distribution (see Question 21 in \cite{50}). If $F$ is continuous, then the optimal path is uniquely determined almost surely. Indeed, it is easy to see that for two different paths $\Gamma_2, \Gamma_2$, $\P(t(\Gamma_1)=t(\Gamma_2))=0$. Since the cardinality of finite paths is countable, it follows that 
 $$\P(\forall v,w\in\Z^d,~\sharp\mathbb{O}(v,w)=1)=1.$$
 Contrarily, if $F$ has an atom, then there can be multiple optimal paths. We study the cardinality, the intersection and the union of optimal paths.\\
 
 Let $(\mathbf{e}_i)^d_{i=1}$ be the canonical basis. We only consider the optimal paths from $0$ to $N\mathbf{e}_1$, though all of the results also hold for any direction. For the sake of simplicity, we write $\mathbb{O}_N=\mathbb{O}(0,N\mathbf{e}_1)$.
\subsection {The number of optimal paths}
\begin{thm}\label{thm:number}
  Suppose that $d\geq 2$ and $F$ is useful and there exists $\alpha\in[0,\infty)$ such that $\P(\tau_e=\alpha)>0$. Then, there exists $c>0$ such that
\begin{equation}\label{inf:number}
  \liminf_{N\to\infty}N^{-1}\log{\sharp \mathbb{O}_N}>c\hspace{4mm}a.s.
\end{equation}
\end{thm}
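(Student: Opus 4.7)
The strategy is to identify, within a thin corridor around the segment from $0$ to $N\mathbf{e}_1$, a linear number of local ``bifurcation sites'' each contributing an independent factor of at least $2$ to $\sharp\mathbb{O}_N$. The atom at $\alpha$ makes such bifurcations possible: since $\P(\tau_e=\alpha)>0$, I would construct a finite box $B^{\circ}\subset\Z^d$ with two distinguished boundary vertices $u,v$ and a positive-probability weight pattern inside $B^{\circ}$ under which (i) there are at least two edge-disjoint self-avoiding paths from $u$ to $v$ inside $B^{\circ}$, both consisting entirely of $\alpha$-edges and hence both of passage time $T^{*}$; (ii) no other self-avoiding $u$-to-$v$ path in $B^{\circ}$ attains $T^{*}$; and (iii) every self-avoiding path inside $B^{\circ}$ that enters or exits at a boundary vertex different from $u,v$ is strictly more expensive. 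Part (iii) exploits either the existence of weights $>\alpha$ in the support of $F$ or, when the support lies in $[F^-,\alpha]$, the fact that the usefulness hypothesis forces $F^-<\alpha$, so that the geometry of alternative routes can be used to break ties.

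Next, I would tile the corridor around the segment $[0,N\mathbf{e}_1]$ by $M=\Theta(N)$ disjoint translated copies $B_1,\dots,B_M$ of $B^{\circ}$, with corresponding marked vertices $u_i,v_i$, and let $\mathcal{E}_i$ be the event that the pattern above is realized in $B_i$. Since the $B_i$ are disjoint, $(\mathcal{E}_i)_{i\leq M}$ is independent with $\P(\mathcal{E}_i)\geq p_0>0$, so a Chernoff bound yields $\sharp\{i:\mathcal{E}_i\text{ holds}\}\geq\tfrac{p_0}{2}M$ outside a set of probability at most $e^{-cN}$. The delicate step, following the resampling argument of \cite{BK}, is to show that on this event a positive fraction of the active boxes are actually used by an optimal path and that using them produces globally distinct optimal paths. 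After conditioning on the configuration outside $\bigcup_i B_i$, property (iii) guarantees that the only way an optimal path can traverse the corridor at the level of $B_i$ while benefiting from the $\alpha$-edges inside is to enter at $u_i$ and leave at $v_i$; property (i) then supplies two choices of sub-path inside $B_i$ that can be substituted without changing the total passage time or the restriction of the path to $B_i^{c}$. Since any optimal path from $0$ to $N\mathbf{e}_1$ has at least $N$ edges, standard geometric estimates force it to cross $\Omega(N)$ of the boxes, so on the above high-probability event one obtains $\Omega(N)$ genuinely independent binary choices and hence $\sharp\mathbb{O}_N\geq 2^{\Omega(N)}$.

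A Borel--Cantelli argument then upgrades this exponential-probability estimate to the almost-sure conclusion in (\ref{inf:number}). The main obstacle, in my view, is the routing step above: the gadget $B^{\circ}$ must be engineered so that the forcing in property (iii) is robust against arbitrary configurations outside $\bigcup_i B_i$, and so that the two sub-paths in property (i) give genuinely different elements of $\mathbb{O}_N$ once glued to the rest of the optimal path. This is precisely the role played by the resampling technique of \cite{BK}, suitably adapted: by resampling the weights inside each $B_i$ conditionally on the environment outside, one decouples the local bifurcation events from the global optimization problem and pays only a uniform local price in probability for each additional binary choice.
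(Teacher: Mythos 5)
Your proposal adopts the BK resampling philosophy, as the paper does, but the central ``gadget and forcing'' mechanism has a genuine gap, and you correctly flag it as the main obstacle. You pre-fix a tiling by boxes $B_i$ with marked ports $u_i,v_i$, realize a local pattern in each $B_i$ with positive probability, and rely on property~(iii) to force optimal paths to enter at $u_i$ and leave at $v_i$. But optimality is a global property: which boxes an optimal path meets, and through which boundary vertices, is governed by the configuration outside the boxes as much as inside them. A local pattern can make a particular $u_i\to v_i$ route cheap, but it cannot prevent the global geodesic from bypassing $B_i$ altogether, or from entering at some other pair of boundary vertices, where the two $\alpha$-subpaths inside are irrelevant. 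Moreover, your boxes tile only a thin corridor around $[0,N\mathbf{e}_1]$, so there is no reason a priori for the optimal path to stay among them. The claim that a linear number of boxes are ``actually used\,\dots\,at their marked ports'' on a single configuration is exactly what your argument needs and exactly what it does not supply; the Chernoff bound you invoke only controls the number of gadgets present, a much weaker event. Resampling ``conditionally on the environment outside'' does not by itself decouple the optimal path's entry/exit from the outside configuration, because those points are determined precisely by that outside configuration.

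The paper sidesteps this by inverting the logic. It does not pre-fix ports: it first shows (Lemma~\ref{lem:p}) that any path $0\to N\mathbf{e}_1$ meets $\Omega(N)$ distinct black $n$-boxes, then for a box $B$ crossed by some optimal path $\Gamma$ it reads off the observed entry/exit pair $(a,b)=(\mathrm{st}(\Gamma,B),\mathrm{fin}(\Gamma,B))$ in the \emph{original} configuration, builds a bespoke self-avoiding path $\gamma_{a,b}$ inside $B$ with carefully spaced corners (Lemma~\ref{conc:gam}), and only then resamples the box so that the weights match the $(\gamma_{a,b},B)$-condition; the probabilistic cost is controlled by averaging uniformly over all $(a,b)\in\partial^+B\times\partial^+B$ (Lemma~\ref{ineq}). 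A second structural difference is the bifurcation bookkeeping: instead of two parallel $\alpha$-paths in a gadget, the paper uses $\mathbb{G}$-turn corner flips $x_i\mapsto x_i^*$ on a single optimal path, counted via the subadditive attached passage time $t^+$, so that Kingman's theorem converts the expectation bound of Lemma~\ref{lem:exp2} into the almost sure linear lower bound \eqref{basic2} without any separate exponential concentration estimate. Your route requires both the forcing to be repaired and an independent proof of exponential concentration for the number of usable bifurcations; the paper's $\mathbb{G}$-turn/$t^+$ construction makes both issues disappear.
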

\begin{remark}
  The similar statement of Theorem~\ref{thm:number} was proved for the directed last passage percolation with finite support distributions in \cite{CKNPS} (see section \ref{history} for the details).
\end{remark}
  We can also obtain the corresponding upper bound of \eqref{inf:number} but with a different constant. 
  \begin{thm}\label{sup:number:thm}
    If $\E[\tau_e^2]<\infty$ and $F$ is useful, then there exists $C>0$ such that 
  \begin{equation}\label{sup:number}
      \limsup_{N\to\infty}N^{-1}\log{\sharp \mathbb{O}_N }\leq C\hspace{4mm}a.s.
    \end{equation}
  \end{thm}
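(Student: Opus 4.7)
The plan is to deduce \eqref{sup:number} from a linear upper bound on the maximal length of optimal paths. Specifically, I will show that there exists $\lambda>0$ such that almost surely, for all $N$ large enough, every $\Gamma\in\mathbb{O}_N$ satisfies $|\Gamma|\leq\lambda N$. Once this is in hand, the trivial count of self-avoiding paths from $0$ of length at most $\lambda N$ gives $\sharp\mathbb{O}_N\leq(2d)^{\lambda N+1}$, and taking logarithms yields $\limsup_{N\to\infty}N^{-1}\log\sharp\mathbb{O}_N\leq\lambda\log(2d)=:C$, as required.

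To establish the length bound, I would combine two standard inputs. First, from the ``useful'' hypothesis (see e.g. Kesten's Saint-Flour notes, and cf.\ the lemma \texttt{useful} invoked earlier in the paper), there exist constants $\delta,\epsilon>0$ such that
\[
\P\bigl(\exists\,\text{self-avoiding path }\Gamma\text{ from }0\text{ with }|\Gamma|=k\text{ and }t(\Gamma)\leq\delta k\bigr)\leq e^{-\epsilon k}
\]
for all $k$ sufficiently large; this quantifies the rarity of long, abnormally light paths. Second, the hypothesis $\E[\tau_e^2]<\infty$ is more than enough for the Cox--Durrett moment condition in dimension $d\geq 2$, so the shape theorem applies and $N^{-1}t(0,N\mathbf{e}_1)\to\mu(\mathbf{e}_1)<\infty$ a.s.\ Hence almost surely $t(0,N\mathbf{e}_1)\leq(\mu(\mathbf{e}_1)+1)N$ for all large $N$. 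Now set $\lambda:=(\mu(\mathbf{e}_1)+2)/\delta$. If some $\Gamma\in\mathbb{O}_N$ has $|\Gamma|\geq\lambda N$, then $t(\Gamma)=t(0,N\mathbf{e}_1)$, and either $t(0,N\mathbf{e}_1)>(\mu(\mathbf{e}_1)+1)N$ (which by the shape theorem happens only finitely often a.s.) or $t(\Gamma)\leq\delta\lambda N\leq\delta|\Gamma|$, which places us in the event of the displayed probability estimate for some $k\geq\lambda N$. The probability of the latter is bounded by $\sum_{k\geq\lambda N}e^{-\epsilon k}=O(e^{-\epsilon\lambda N})$, summable in $N$, so Borel--Cantelli closes the argument.

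The hard part is really the first input above: the exponential bound on the probability of abnormally light long paths. This is where the usefulness assumption enters essentially---through the subcritical comparison with ordinary percolation when $F^-=0$ or with oriented percolation otherwise---and is the only nontrivial ingredient. The rest is a routine shape-theorem upper bound under the second-moment assumption, the elementary $(2d)^k$ count of paths from $0$, and a Borel--Cantelli bookkeeping step; in particular, no resampling argument of the van den Berg--Kesten type appears to be needed for the upper bound, in contrast with Theorem~\ref{thm:number}.
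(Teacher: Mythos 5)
Your proof is correct and follows the paper's strategy in essence: reduce \eqref{sup:number} to showing that, almost surely for large $N$, every $\Gamma\in\mathbb{O}_N$ satisfies $\sharp\Gamma\leq KN$, then bound $\sharp\mathbb{O}_N\leq(2d)^{KN}$ by the trivial count of self-avoiding paths from the origin and take logarithms. Both you and the paper establish the length bound by combining Kesten's estimate on abnormally light long self-avoiding paths (the paper's \eqref{5.8}, from Proposition 5.8 of \cite{Kes86}) with an upper tail bound on $t(0,N\mathbf{e}_1)$, and close with Borel--Cantelli. The only substantive difference lies in that upper tail ingredient. The paper invokes the quantitative Chebyshev bound \eqref{fell}, which uses $\E[\tau_e^2]<\infty$ together with $2d$ disjoint comparison paths to get the polynomial rate $\P(\exists\,\Gamma\in\mathbb{O}_N:\sharp\Gamma>KN)\leq KN^{-2d}$, and then sums this in Borel--Cantelli; you instead use the softer almost-sure convergence $N^{-1}t(0,N\mathbf{e}_1)\to\mu(\mathbf{e}_1)$ from the subadditive ergodic theorem (which needs only $\E[\tau_e]<\infty$, already guaranteed by usefulness) to rule out the high-passage-time alternative eventually, and reserve Borel--Cantelli for the Kesten light-path event alone, whose probability is already exponentially small. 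Your variant is thus slightly more economical and in fact shows that the explicit second-moment hypothesis in the theorem statement is not strictly necessary for this particular upper bound; the paper's Chebyshev route is the natural choice there only because the estimate \eqref{fell} is needed anyway in the proof of Lemma~\ref{exp}, so it costs nothing to reuse it. Your closing observation that no van den Berg--Kesten resampling is required for this direction is also correct.
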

It is certainly desirable to have the existence of $\lim_{N\to\infty}N^{-1}\log{\sharp\left[\mathbb{O}_N\right]}$, but it seems to be much harder problem.
\subsection{Intersection and union of optimal paths}
Theorem~\ref{thm:number} tells us that there are exponentially many open paths. Our next results partially reveal how the optimal paths are distributed in the space.
\begin{figure}[b]\label{fig:intersection}
 \includegraphics[width=8.0cm]{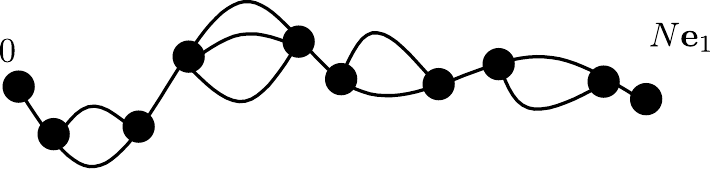}
\caption{}
Schematic picture of intersection of optimal paths.
  \label{fig:two}
\end{figure}

\begin{thm}\label{thm:intersection}
  Suppose that $F$ is useful. Then there exists $c>0$ such that for any $N\in\N$,
  $$\E\left[\sharp \left[\bigcap_{\Gamma\in\mathbb{O}_N}\Gamma\right]\right]\geq{}cN,$$
  where we regard a path $\Gamma$ as a set of edges as in \eqref{edge-vertex}.
\end{thm}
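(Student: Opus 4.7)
The plan is to prove the theorem by showing that for each $k \in \{1, \dots, N\}$, the axial edge $e_k := \langle (k-1)\mathbf{e}_1, k\mathbf{e}_1 \rangle$ belongs to $\bigcap_{\Gamma \in \mathbb{O}_N} \Gamma$ with probability at least $c > 0$, uniform in $k$ and $N$; summing via linearity of expectation then yields the $cN$ bound.

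First I would reformulate the event $\{e_k \in \bigcap_\Gamma \Gamma\}$ in a form amenable to resampling. Writing $t^{-e}(v,w)$ for the first passage time in $(\Z^d, E^d \setminus \{e\})$ and setting
\[A_k := t^{-e_k}(0, (k-1)\mathbf{e}_1) + t^{-e_k}(k\mathbf{e}_1, N\mathbf{e}_1), \qquad B_k := t^{-e_k}(0, N\mathbf{e}_1),\]
the event becomes $\{\tau_{e_k} < B_k - A_k\}$, and both $A_k$ and $B_k$ are independent of $\tau_{e_k}$.

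The core step is a van den Berg--Kesten resampling. Fix $L$ large, let $Q_k$ denote the edges in the $\ell^\infty$-box of radius $L$ around $k\mathbf{e}_1$, and condition on the edge weights outside $Q_k$. I would construct a local event $\mathcal{E}_k$ on $\{\tau_e\}_{e \in Q_k}$ as an intersection of independent constraints: (i) $\tau_{e_k}$ lies below a small threshold $\alpha$; (ii) every non-$e_k$ edge of $Q_k$ crossing the hyperplane $H_k := \{x_1 = k - \tfrac12\}$ has $\tau_e \geq M$ for $M$ a large value in the support of $F$; (iii) the remaining interior edges of $Q_k$ carry values giving cheap passage from $\partial Q_k$ to $u := (k-1)\mathbf{e}_1$ and from $v := k\mathbf{e}_1$ to $\partial Q_k$. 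All three constraints have positive probability by usefulness of $F$, so by edge independence $\P(\mathcal{E}_k) \geq c_0 > 0$ uniformly in $k$ and in the outer configuration.

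It remains to show that on $\mathcal{E}_k$, together with a high-probability ``typical'' event on the outer configuration, one has $\tau_{e_k} < B_k - A_k$. Any optimal path avoiding $e_k$ must cross $H_k$ either (a) via a barrier edge inside $Q_k$, incurring excess $\geq M$ over the cheap $e_k$-route from (iii), or (b) via an edge outside $Q_k$, forcing a transverse deviation of order $L$ from the axis at height $k$. Case (a) is handled by choosing $M$ larger than the access-cost bound in (iii). The main obstacle will be case (b): establishing that such detours cost more than the savings from avoiding $e_k$ requires positivity of the time constant (implied by usefulness) together with a concentration estimate on the cost of transverse travel, which under only $\E\tau_e < \infty$ follows from the subadditive-ergodic framework developed in \cite{BK}.
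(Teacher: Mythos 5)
Your plan hinges on the claim that, for each $k$, the axial edge $e_k=\langle(k-1)\mathbf{e}_1,k\mathbf{e}_1\rangle$ lies in $\bigcap_{\Gamma\in\mathbb{O}_N}\Gamma$ with probability bounded below uniformly in $k$ and $N$. That claim is almost certainly false for $k$ in the bulk. Optimal paths from $0$ to $N\mathbf{e}_1$ wander transversely away from the first coordinate axis by a distance growing in $N$, so the event that an optimal path even visits the specific vertex $(k-1)\mathbf{e}_1$ (let alone uses the specific edge $e_k$) has probability tending to $0$ as $N\to\infty$ for $k$ of order $N$. No amount of local resampling inside a fixed box $Q_k$ of radius $L$ can repair this: making the environment inside $Q_k$ favorable can save at most $O(L)$ in passage time, which cannot pull the geodesic toward the axis when the external configuration makes it optimal to cross the hyperplane $\{x_1=k-\tfrac12\}$ far away from $k\mathbf{e}_1$. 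Put differently, there is no ``typical external event'' of probability close to $1$ forcing the optimal path to enter $Q_k$; such an event would contradict the transverse delocalization of the geodesic. So case (b) in your analysis is not a technical loose end --- it is where the argument breaks, and it cannot be fixed by concentration estimates at fixed $L$.

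The paper's proof avoids this entirely by summing over \emph{all} edges, not just axial ones. It first proves (Lemma~\ref{connection}) that
\[
\E\Bigl[\sharp\bigcap_{\Gamma\in\mathbb{O}_N}\Gamma\Bigr]\ \geq\ F(\alpha)\,\E\bigl[\sharp\{\eta:\exists\Gamma\in\mathbb{O}_N,\ \eta\in\Gamma,\ \tau_\eta>\alpha\}\bigr],
\]
via a single-edge resampling: if $\eta$ is on some optimal path with $\tau_\eta>\alpha$, then replacing $\tau_\eta$ by an independent value $\leq\alpha$ forces every optimal path in the new configuration to use $\eta$. The right-hand side is then bounded below by $cN$ using the van den Berg--Kesten strict inequality for time constants applied to the modification $\tilde\tau_e=\tau_e+\mathbb{I}_{\{\tau_e>\alpha\}}$. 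If you want to salvage a direct argument in the spirit of your proposal, you would need to choose the target edge adaptively as a function of the configuration (rather than fixing it to be axial), which is essentially what the paper's conditional resampling accomplishes.
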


\begin{cor}\label{cor1}
  Suppose that $F$ is useful and $\E\tau_e^2<\infty$. Then there exists $c>0$ such that for any $N\in\N$,
  $$\P\left(\sharp \left[\bigcap_{\Gamma\in\mathbb{O}_N}\Gamma\right]\geq{}cN\right)\geq{}c.$$
\end{cor}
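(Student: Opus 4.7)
The plan is to convert the first-moment estimate of Theorem~\ref{thm:intersection} into a positive-probability statement by a Paley--Zygmund (second moment) argument. Write $X_N:=\sharp[\bigcap_{\Gamma\in\mathbb{O}_N}\Gamma]$, so Theorem~\ref{thm:intersection} yields $c_1>0$ with $\E[X_N]\geq c_1 N$. Since $\bigcap_{\Gamma\in\mathbb{O}_N}\Gamma\subseteq\Gamma$ for every $\Gamma\in\mathbb{O}_N$, we have $X_N\leq L_N:=\min_{\Gamma\in\mathbb{O}_N}|\Gamma|$, and therefore $\E[X_N^2]\leq\E[L_N^2]$. Once we know $\E[L_N^2]\leq CN^2$ for some $C<\infty$, the Paley--Zygmund inequality gives
\[
\P\!\left(X_N\geq \tfrac12 c_1 N\right)\geq \P\!\left(X_N\geq \tfrac12\E[X_N]\right)\geq \frac{(\E[X_N])^2}{4\,\E[X_N^2]}\geq \frac{c_1^2}{4C},
\]
which proves the corollary with $c:=\min\{c_1/2,\,c_1^2/(4C)\}$.

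For the bound $\E[L_N^2]\leq CN^2$ I would split on whether $F^->0$ or $F^-=0$. If $F^->0$, then $\tau_e\geq F^-$ a.s., so for any $\Gamma\in\mathbb{O}_N$
\[
|\Gamma|\leq \frac{t(\Gamma)}{F^-}=\frac{t(0,N\mathbf{e}_1)}{F^-}\leq \frac{1}{F^-}\sum_{i=1}^{N}\tau_{e_i},
\]
where $e_1,\dots,e_N$ are the edges of the direct axial path from $0$ to $N\mathbf{e}_1$; the $\tau_{e_i}$ are i.i.d.\ with $\E\tau_e^2<\infty$, so this upper bound has second moment $O(N^2)$. If $F^-=0$, then usefulness forces $F(0)<p_c(d)$ and one invokes the standard Kesten-type estimate (the same ingredient used in the proof of Theorem~\ref{sup:number:thm}): introducing the subcritical percolation of the edges $\{e:\tau_e\leq\delta\}$ for some $\delta>0$ with $F(\delta)<p_c(d)$, any self-avoiding optimal path has length at most $KN$ except on an event of exponentially small probability, and combining this with $\E\tau_e^2<\infty$ to dominate the contribution of the rare long-path event yields $\E[L_N^2]=O(N^2)$.

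The main obstacle is controlling $\E[L_N^2]$ in the zero-atom regime $F^-=0$: the deterministic inequality $|\Gamma|\le t(\Gamma)/F^-$ is no longer available and one must exploit the subcriticality of the percolation of small-weight edges in order to bound the length of an optimal path. The case $F^->0$ and the Paley--Zygmund deduction are both routine.
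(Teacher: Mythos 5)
Your proposal is correct and takes essentially the same route as the paper: the paper's proof also combines the first-moment bound from Theorem~\ref{thm:intersection} with the length/second-moment estimate established inside the proof of Lemma~\ref{exp} (i.e.\ the bound \eqref{length-opt} on $\P(\exists\Gamma\in\mathbb{O}_N:|\Gamma|\geq sN)$), splitting $\E[X_N]$ over the events $\{X_N>KN\}$, $\{cN/2\le X_N\le KN\}$, $\{X_N<cN/2\}$ and rearranging. Your Paley--Zygmund phrasing is an equivalent and arguably cleaner packaging of the same second-moment argument, with the same treatment of the two cases $F^->0$ and $F^-=0$ used for the second-moment estimate.
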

This shows that there are lots of {\em pivotal edges} on optimal paths with positive probability. We believe that the event of the left hand side holds with high probability.
\begin{thm}\label{thm:union}
  Suppose that $F$ is useful and there exists $\ell>2(d-1)$ such that $\E[\tau_e^\ell]<\infty$. Then there exists $C>0$ such that for any $N\in\N$,
  $$\E\left[\sharp \left[\bigcup_{\Gamma\in\mathbb{O}_N}\Gamma\right]\right]\leq{}CN.$$
\end{thm}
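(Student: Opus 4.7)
The plan is to write
\begin{equation*}
\E\Bigl[\sharp \bigcup_{\Gamma\in\mathbb{O}_N}\Gamma\Bigr]=\sum_{e\in E^d}\P\Bigl(e\in\bigcup_{\Gamma\in\mathbb{O}_N}\Gamma\Bigr)
\end{equation*}
and to bound each summand. For $e=\langle u,v\rangle$, splitting any optimal path through $e$ into its $0\to u$ subpath, the edge $e$, and its $v\to N\mathbf{e}_1$ subpath yields
\begin{equation*}
\Bigl\{e\in\bigcup_{\Gamma\in\mathbb{O}_N}\Gamma\Bigr\}\subset\bigl\{t(0,u)+\tau_e+t(v,N\mathbf{e}_1)=t(0,N\mathbf{e}_1)\bigr\}\cup\bigl\{t(0,v)+\tau_e+t(u,N\mathbf{e}_1)=t(0,N\mathbf{e}_1)\bigr\},
\end{equation*}
since each subpath must itself be optimal between its endpoints. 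Writing $Y_e$ for the nonnegative slack $t(0,u)+\tau_e+t(v,N\mathbf{e}_1)-t(0,N\mathbf{e}_1)$, the task reduces to summing $\P(Y_e=0)$ over all edges.

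I would then partition edges $e$ by the transverse distance $r(e)$ between the midpoint of $e$ and the segment $[0,N\mathbf{e}_1]$. Let $\mu$ denote the time constant. For edges far from the segment, combining Alexander's sharp shape bound $|\E t(0,x)-\mu(x)|\leq C\sqrt{|x|\log|x|}$ with the triangle inequality $\mu(u)+\mu(N\mathbf{e}_1-v)\geq\mu(N\mathbf{e}_1)$ and the growth $\mu(y)\geq c|y|$ (from $F$ being useful) produces a lower bound on $\E Y_e$ in terms of $r(e)$ and the longitudinal excess of $e$ beyond $[0,N\mathbf{e}_1]$. Under the moment assumption $\E[\tau_e^\ell]<\infty$, Kesten-type moment bounds give $\E|t(0,x)-\E t(0,x)|^\ell\leq C|x|^{\ell/2}$, and Markov's inequality then controls $\P(Y_e=0)\leq \P(|Y_e-\E Y_e|\geq \E Y_e)\leq C N^{\ell/2}(\E Y_e)^{-\ell}$. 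A careful classification of the contributions --- checking that the moment exponent $\ell>2(d-1)$ is exactly what is needed to make the sum over off-axis edges (at transverse distances exceeding the fluctuation scale $\sqrt{N\log N}$, with $O(Nr^{d-2})$ edges per unit radius in $r$) reduce to $O(N)$ --- forms the bulk of the routine calculation.

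The principal obstacle is the tube of edges at transverse distance $r\lesssim\sqrt{N\log N}$, where $\E Y_e$ is below the fluctuation scale of $t$ and the Markov estimate above degenerates, while the naive count of edges in the tube is super-linear. My plan for this regime is to borrow the resampling technique of~\cite{BK}: for each such edge $e$, replace $\tau_e$ by an independent copy $\tau'_e$ and express the resulting change of $t(0,N\mathbf{e}_1)$ in terms of $\mathbf{1}\{e\in\bigcup\Gamma\}$. An Efron--Stein-type inequality then couples the tube contribution $\sum_e \P(e\in\bigcup\Gamma)$ to Kesten's variance bound $\mathrm{Var}(t(0,N\mathbf{e}_1))\leq CN$, which should yield the desired linear control in the bulk and complete the proof.
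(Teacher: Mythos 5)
There are two genuine gaps here, and the overall route is quite different from the paper's, which relies crucially on a coarse-graining argument your proposal does not replicate.

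\textbf{Gap in the far-field Markov estimate.} You claim that with $\ell>2(d-1)$, the bound $\P(Y_e=0)\le CN^{\ell/2}(\E Y_e)^{-\ell}$ sums to $O(N)$ over the off-axis edges. It does not. Alexander's bound gives $\E Y_e\gtrsim r^2/N - C\sqrt{N\log N}$, so this is positive (and of order $r^2/N$) only once $r\gtrsim N^{3/4}(\log N)^{1/4}$, not once $r\gtrsim\sqrt{N\log N}$. Plugging into Markov for the regime $N^{3/4}\lesssim r\lesssim N$, with $O(Nr^{d-2})$ edges at transverse distance $r$, one finds
\begin{equation*}
\sum_{r\gtrsim N^{3/4}} Nr^{d-2}\cdot CN^{\ell/2}\Bigl(\frac{N}{r^2}\Bigr)^{\ell}\;\asymp\;N^{1+3\ell/2}\bigl(N^{3/4}\bigr)^{d-1-2\ell}\;=\;N^{1+3(d-1)/4},
\end{equation*}
where the $\ell$-dependence cancels. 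This is $\gg N$ for every $d\ge 2$, so no polynomial moment assumption rescues the calculation. The root cause: there are super-linearly many edges whose expected slack $\E Y_e$ sits below the fluctuation scale of $t$, and the pure first-moment/union bound over edges cannot handle them, which is exactly what forces the paper to take a different route.

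\textbf{Gap in the near-field Efron--Stein argument.} You want to bound $\sum_e\E[(t-t^{(e)})^2]$ from above via $\mathrm{Var}(t)\le CN$. But Efron--Stein runs the other way: $\mathrm{Var}(t)\le\tfrac12\sum_e\E[(t-t^{(e)})^2]$, so an upper bound on the variance gives no control on the sum. There is no general reverse Efron--Stein. Moreover, even where the resampling comparison is informative it only relates $\E[(t-t^{(e)})^2]$ to the probability that $e$ lies on some geodesic \emph{and} $\tau_e>\alpha$; it says nothing about low-weight geodesic edges, which form the bulk of $\bigcup_\Gamma\Gamma$.

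The paper's proof does something your plan omits. It first bounds the expensive part $K_N=\{e\in\bigcup_\Gamma\Gamma:\tau_e>\alpha\}$ by $\E\sharp K_N\le F(\alpha)^{-1}\E[\min_\Gamma\sharp\Gamma]\le CN$ via a single-edge resampling inequality (Lemma~\ref{connection} in reverse) plus a linear geodesic-length bound. The real work is upgrading this to $R_N$: one tiles $\Z^d$ by $k$-boxes, declares a box bad if it meets $R_N$ but its fattened neighborhood misses $K_N$, and shows via Zhang's concentration estimate (this is exactly where $\ell>2(d-1)$ enters, through the $k^{-2(d-1)-\epsilon}$ rate against the $k^{2(d-1)}$ boundary pairs) that bad boxes are rare; a Peierls/lattice-animal count then forces $\sharp R_N\lesssim M\sharp K_N$ with overwhelming probability, and Cauchy--Schwarz plus the second-moment bound $\E[(\sharp R_N)^2]\le CN^{2d}$ finishes. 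That connectivity/renormalization step is the missing idea in your plan, and neither of your two regimes supplies a substitute for it.
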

\begin{remark}
 A similar statement to Theorem~\ref{thm:union} was proved in \cite{Zhang06} (see section \ref{history2} for the details).
\end{remark}
\begin{remark}
  The moment condition of Theorem~\ref{thm:union} is used in Lemma~\ref{00} and Lemma~\ref{exp}.
\end{remark}
\subsection{Historical background and related works}
First-passage percolation is a model of the spread of a fluid through some random medium, which was introduced by Hammersley and Welsh in \cite{HW65}. Since it is easy to check that $t(\cdot,\cdot)$ is pseudometric and moreover, if $F(0)=0$, exactly a metric almost surely, we can naturally regard the model as a random metric space. Hence mathematical objects of interest are the asymptotic behavior of the first passage time $t(v,w)$ (metric) as $|v-w|_1\to\infty$ and its optimal paths (geodesics). Over 50 years, there has been significant progress for these problems but there still remains many interesting problems (see \cite{50} for more on the background and open problems).
\subsubsection{Number of optimal paths}\label{history}
There has been revived interest on the number of optimal (or maximizing) paths in directed last passage percolation and oriented percolation~\cite{CKNPS,FY12,GGM16}. (Here assigning $\tau_e=0$ if it is open and $\tau_e=\infty$ otherwise, an open path for oriented percolation can be seen to be an optimal path.) Especially, in \cite{CKNPS}, it was proved that the number of maximizing paths of directed last passage percolation with finite support distributions has an exponential growth. The proof was based on the multi-valued map principle (MVMP). In this paper, we prove a similar result by another method, van den Berg-Kesten's resampling argument. Remark that our techniques do not work in critical and supercritical regime, i.e. $\P(\tau_e=0)\ge p_c(d)$ due to the lack of Lemma~\ref{useful}. In supercritical regime, the number of optimal paths should be infinity with high probability.  Even in this case, MVMP seems to be applicable, though not straight forwardly. 

\subsubsection{Intersection and union of optimal paths}\label{history2}
In directed polymer models, the overlap of independent polymers naturally arise in the analysis of the free energy and it has received much 
　interest in this field (see e.g. \cite{Comets-Lec}). In the model which allows us to use Malliavin calculus, especially integration by parts, much is known for the overlaps (see \cite{CC13} and reference therein). However, to my knowledge, little is known for general setting. For the union of optimal paths, it was showed in \cite{Zhang06} that the same estimate of Theorem~\ref{thm:union} for optimal paths from the origin to the boundary of the bounded area if $F$ is subcritical Bernoulli distribution, that is the case $\P(\tau_e\in\{0,1\})=1$ and $\P(\tau_e=0)<p_c(d)$. The proof in \cite{Zhang06} is based on the Russo formula, which seems to be specific to the Bernoulli case.

Note that our results strongly suggest that all optimal paths from $0$ to $N\mathbf{e}_1$ are contained in some thin sausage (see Figure \ref{fig:intersection}). In other words, in practice, these optimal paths should be represented by one optimal path.

\subsection{Notation and terminology}
This subsection collects some notations and terminologies for the proof.
\begin{itemize}
\item We use $c>0$ for a small constant and $C>0$ for a large constant. They may change from line to line.
  \item Given $a\in\R$, let $[a]$ be the greatest integer less than or equal to $a$.
\item Given a path $\gamma=(x_i)^l_{i=1}$, we define a new path as $\gamma[x_m,x_n]=(x_i)^n_{i=m}$.
\item Given two paths $\gamma_1=(x_i)^l_{i=1}$ and $\gamma_2=(y_i)^{l'}_{i=1}$ with $x_l=y_1$, we denote the connected path by $\gamma_1\oplus\gamma_2$, i.e. $\gamma_1\oplus\gamma_2=(x_1\cdots,x_l,y_1,\cdots,y_{l'})$.
\item Given $x,y\in\R^d$, we define $d_{\infty}(x,y)=\max\{|x_i-y_i|~i=1,\cdots,d\}$. It is useful to extend the definition as
  $$d_{\infty}(A,B)=\inf\{d_{\infty}(x,y)|~x\in A,~y\in B\}\text{\hspace{4mm}for $A,B\subset \R^d$}.$$
  When $A=\{x\}$, we write $d_{\infty}(x,B)$.
\item For $x\in\R^d$ and $r>0$, denote the closed ball whose center is $x$ and radius is $\delta$ by $B(x,r)$.
\item Given a set $D\subset\Z^d$, let us define the inner and outer boundary of $D$ as $$\partial^- D=\{v\in D|~\exists w\notin D\text{ such that }|v-w|_1=1\},$$
  $$\partial^+ D=\{v\notin D|~\exists w\in D\text{ such that }|v-w|_1=1\}.$$
  
\item Given a configuration $\tau^{B}=\{\tau^{B}_e\}_{e\in E^d}$ that is a modification of $\tau$, we denote the corresponding first passage time by $t^{B}(v,w)$.
\item Let $F^-$ and $F^+$ be the infimum and supremum of the support of $F$, respectively:
  $$F^-=\inf\{\delta\ge 0|~\P(\tau_e<\delta)>0\},~F^+=\sup\{\delta\ge 0|~\P(\tau_e>\delta)>0\},$$
  where let $F^+=\infty$ if $F$ is unbounded.
  \item We use the convention $a/\infty=0$ for any $a\in\R$.
\end{itemize}

        \section{Proof for Theorem~\ref{thm:number}}
        
Given $e=\langle x,y\rangle$, we sometimes write $\tau_e=\tau(x,y)$ in this section.
\begin{Def} Given a path $\gamma=(x_0,\cdots,x_l)$, let us define the reflection of $x_i$ across $\gamma$ as $x_i^*=x_{i-1}+(x_{i+1}-x_i)$ with the convention $x_0^*=x_0$ and $x_l^*=x_l$. $x\in\Z^d$ is said to be $\mathbb{G}$--turn for $\gamma$ if there exists $i\in\{1,\cdots,l-1\}$ such that $x=x_i$, $\tau(x_{i-1},x_i)+\tau(x_i,x_{i+1})=\tau(x_{i-1},x_i^*)+\tau(x^*_{i},x_{i+1})$, $x_i-x_{i-1}$ is perpendicular to $(x_{i+1}-x_i)$ and  $x_i^*\notin\gamma$.
\end{Def}
\begin{Def}
  We set the attached passage time as $t^+(\gamma)=t(\gamma)+\beta\sharp\{x_i|~x_i\text{ is $\mathbb{G}$--turn for $\gamma$}\}$ with a constant $\beta>0$ to be chosen in Lemma~\ref{ineq} and denote by $t^+(0,N\mathbf{e}_1)$ the first passage time from $0$ to $N\mathbf{e}_1$ corresponding to $t^+(\cdot)$. We call it the attached first passage time.
\end{Def}
\begin{lem}\label{lem:exp2}
  \begin{equation}\label{exp2}
  \E[\min\{\sharp{}\{x\in\Gamma|~\text{$x$ is $\mathbb{G}$--turn for $\Gamma$}\}|~\Gamma\in \mathbb{O}^+_N\}]\geq{}cN.
  \end{equation}
   Note that minimum of \eqref{exp2} is over all optimal path for $t^+(\gamma)$.
\end{lem}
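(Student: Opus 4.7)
The plan is to apply the van den Berg–Kesten resampling argument locally in a sequence of disjoint boxes arranged along the segment $[0, N\mathbf{e}_1]$, and to add up the local $\mathbb{G}$--turn contributions. Fix a large constant $L$ (to be chosen). For $k=1,\dots,M:=[N/L]$, let $B_k$ be a box of bounded side length centered at $kL\mathbf{e}_1$, chosen so that the $B_k$ are pairwise disjoint and lie well inside the cylinder containing all plausible optimal paths. The goal is a uniform lower bound
\[
\P\bigl(\mathcal{G}_k\bigr)\ \geq\ c_0\ =\ c_0(L,F,d)\ >\ 0,
\]
where $\mathcal{G}_k$ is the event that \emph{every} $\Gamma\in\mathbb{O}^+_N$ intersecting $B_k$ contains a $\mathbb{G}$--turn inside $B_k$. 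Since $\mathbb{G}$--turns in different $B_k$ are geometrically distinct, this yields $\E[\min_{\Gamma\in\mathbb{O}^+_N}\sharp\{\text{$\mathbb{G}$--turns of }\Gamma\}]\geq \sum_k \P(\mathcal{G}_k)\geq c_0 M \geq cN$, as required.

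To prove the local claim I would condition on the environment $\tau\!\upharpoonright_{E^d\setminus E(B_k)}$ outside $B_k$. Given this outer configuration, the restriction of any $t^+$--optimal path $\Gamma$ to $B_k$ must itself be $t^+$--optimal between its entry and exit points on $\partial B_k$. I then resample the edges inside $B_k$ from their product law. Using the atom assumption $\P(\tau_e=\alpha)>0$, with probability bounded below by $c_0>0$ one can realize a specific "gadget" configuration inside $B_k$: enough edges in $B_k$ take value $\alpha$, arranged in a small perpendicular--turn pattern, and the surrounding edges are made sufficiently expensive (sampled from the bulk of $F$ away from $F^-$, which has positive probability because $F$ is useful) so that every locally $t^+$--optimal sub-path through $B_k$ is forced through a perpendicular turn $(x_{i-1},x_i,x_{i+1})$ where all four edges $\langle x_{i-1},x_i\rangle,\langle x_i,x_{i+1}\rangle,\langle x_{i-1},x_i^*\rangle,\langle x_i^*,x_{i+1}\rangle$ take the value $\alpha$. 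The equality $\tau(x_{i-1},x_i)+\tau(x_i,x_{i+1})=\tau(x_{i-1},x_i^*)+\tau(x_i^*,x_{i+1})$ is then automatic, and the geometry of the gadget can be chosen so that $x_i^*$ lies off the forced sub-path, making $x_i$ a genuine $\mathbb{G}$--turn. The constant $\beta$ appearing in the definition of $t^+$ is then fixed small enough (as in Lemma~\ref{ineq}) that the single extra $\mathbb{G}$--turn penalty does not reroute the locally optimal path around the gadget.

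Finally, to convert this into a contribution to $\mathcal{G}_k$ one must account for the possibility that an optimal path avoids $B_k$ altogether. This is handled by the standard shape--theorem--type fact that, for useful $F$, optimal paths from $0$ to $N\mathbf{e}_1$ are confined to a linear--width cylinder with probability $\to 1$, so all but a vanishing fraction of boxes $B_k$ along the axis are crossed by every optimal path; the contribution to the sum is still $\Theta(N)$.

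The main technical obstacle is the "universality" required of the resampled gadget in Step~2: the sub-path may enter and exit $B_k$ at many different locations, and the gadget configuration must force a $\mathbb{G}$--turn for every admissible pair of entry/exit points on $\partial B_k$, \emph{and} must do so with probability bounded away from $0$ conditional on an arbitrary useful outer configuration. This forces a careful design of the gadget (likely a symmetric cross pattern of $\alpha$--edges embedded in a moat of moderately--sized edges) together with a quantitative comparison between the conditional law on $B_k$ and the unconditional product law, using usefulness of $F$ to preclude the bad outer configurations in which the sub-path could bypass the gadget by cycling through low--cost $F^-$--edges.
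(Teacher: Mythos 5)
The resampling--with--a--gadget strategy matches the paper in spirit, and you correctly identify the universality of the gadget as the main technical obstacle. However, the box--placement scheme is fatally flawed. You place $[N/L]$ boxes of bounded size at the fixed locations $kL\mathbf{e}_1$ along the axis and then claim that ``all but a vanishing fraction of boxes $B_k$ along the axis are crossed by every optimal path'' by invoking a linear--width--cylinder concentration. That implication is false: confinement of optimal paths to a cylinder of width $\Theta(N)$ (or even sublinear width) around the axis does not force the path through bounded boxes pinned to the axis. With transversal fluctuations of positive order, the probability that a geodesic enters a \emph{fixed} bounded box $B_k$ at distance $kL$ from the origin tends to zero as $N\to\infty$, so $\sum_{k}\P\bigl(\Gamma^*\text{ enters }B_k\text{ and has a forced }\mathbb{G}\text{--turn there}\bigr)=o(N)$, not $\Theta(N)$. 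There is also a bookkeeping gap: your event $\mathcal{G}_k$ is vacuously true when no optimal path meets $B_k$, so $\P(\mathcal{G}_k)\geq c_0$ does not by itself contribute to $\min_{\Gamma\in\mathbb{O}^+_N}\sharp\{\mathbb{G}\text{--turns of }\Gamma\}$.

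The paper circumvents this by considering \emph{every} $n$-box $B^j(l;n)$ in $\Z^d$ (overlapping, $\Theta(N^d)$ of them) rather than pinning a thin chain to the axis. The two key inputs are: (a) a Peierls--type estimate (Lemma~\ref{lem:p}) showing that with high probability any path from $0$ to $N\mathbf{e}_1$ visits at least $\epsilon N$ distinct black $n$-cubes, combined with the topological fact that passing from $S(l;n)$ to the exterior of $T(l;n)$ forces a crossing of one of the surrounding $n$-boxes in the short direction; together these give $\E[\sharp\{\text{gray $n$-boxes}\}]\geq\epsilon N/2$ \emph{regardless of where the geodesics wander}; (b) the resampling inequality $\P(\text{$B$ is $\mathbb{G}$--turn})\geq c\,\P(\text{$B$ is gray})$ (Lemma~\ref{ineq}), which is the local gadget argument you sketch. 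Summing over all boxes, with a $2d$ factor for overlaps, gives \eqref{exp2}. One further point you should fold into your gadget analysis: the crossing--in--the--short--direction hypothesis, combined with blackness, forces the entry and exit points $a,b\in\partial^+B$ to satisfy $|a-b|_1\gtrsim n$; this separation is what guarantees the gadget path $\gamma_{a,b}$ actually contains a perpendicular turn, and without it there is no reason for the resampled sub-path to acquire a $\mathbb{G}$--turn at all.
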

We postpone the proof and first prove Theorem~\ref{thm:number}. By the definition of $\mathbb{G}$--turn, we have
\begin{equation}\label{basic}
  \begin{split}
    \beta\min\{\sharp{}\{x\in\Gamma|~\text{$x$ is $\mathbb{G}$--turn for $\Gamma$}\}|~\Gamma\in \mathbb{O}^+_N\}&\leq t^+(0,N\mathbf{e}_1)-t(0,N\mathbf{e}_1)\\
    &\leq \beta\min\{\sharp{}\{x\in\Gamma|~\text{$x$ is $\mathbb{G}$--turn for $\Gamma$}\}|~\Gamma\in \mathbb{O}_N\}.
    \end{split}
\end{equation}
Indeed, taking $\Gamma\in \mathbb{O}^+_N$ which attains the minimum, by the definition of $t^+(\cdot)$, we have $$t^+(0,N\mathbf{e}_1)-t(0,N\mathbf{e}_1)\ge t^+(\Gamma)-t(\Gamma)= \beta\min\{\sharp{}\{x\in\Gamma|~\text{$x$ is $\mathbb{G}$--turn for $\Gamma$}\}|~\Gamma\in \mathbb{O}^+_N\},$$
which implies the first inequality. For the second inequality of \eqref{basic}, we only take $\Gamma\in\mathbb{O}_N$ attaining the minimum and calculate $t^+(\Gamma)-t(\Gamma)\ge  t^+(0,N\mathbf{e}_1)-t(0,N\mathbf{e}_1)$.\\

By Kingman's subadditive ergodic theorem, there exist $\mu,\mu^+\ge 0$ such that almost surely,
$$\lim_{N\to\infty} N^{-1}(t^+(0,N\mathbf{e}_1)-t(0,N\mathbf{e}_1))= \lim_{N\to\infty} N^{-1}(\E t^+(0,N\mathbf{e}_1)-\E t(0,N\mathbf{e}_1))=\mu^+-\mu.$$
On the other hand, by Lemma~\ref{lem:exp2} and \eqref{basic}, we have $\mu^+-\mu\ge c$. This together with \eqref{basic} leads to

\begin{equation}\label{basic2}
  \begin{split}
    \min\{\sharp{}\{x\in\Gamma|~\text{$x$ is $\mathbb{G}$--turn for $\Gamma$}\}|~\Gamma\in \mathbb{O}_N\}\geq c N.
    \end{split}
\end{equation}
Let us take an arbitrary optimal $\Gamma=(x_i)^l_{i=1}$ path satisfying $\sharp{}\{x\in\Gamma|~\text{$x$ is $\mathbb{G}$--turn for $\Gamma$}\}\geq c N.$
Let us define $x_i^{\G}$ as $x_i^{\G}=x_i^*$ if $x_i$ is $\G$--turn and $x_i^{\G}=x_i$ otherwise. Note that for any choice $y_i\in \{x_i,x_i^{\G}\}$, $(y_i)^l_{i=1}$ is optimal path, i.e. $t((y_i))=t(0,N\mathbf{e}_1)$. Although it may be not self--avoiding, since the number of overlaps at any vertex $x$, i.e. $\sharp\{y_i|~y_i^*=x\}$, is at most $2d$, \eqref{basic2} yields $\sharp \mathbb{O}_N \geq 2^{cN/2d}$ as desired.
\begin{proof}[Proof of Lemma~\ref{lem:exp2}]
 Recall that $F^+$ be the suprimum of the support of $F$.  
We take $n>0$ sufficiently large depending on the distribution $F$ but not depending on $N$. We prepare three kinds of box whose notations are the same as in \cite{BK} (See Figure \ref{fig:two}). First, define the hypercubes $S(l;n)$, for $l\in\Z^d$ and $n\in\N$, by 
$$S(l;n)=\{v\in\Z^d:nl\le v_i< n(l+1)\text{ for any $i$}\}.$$
We call these hypercubes $n$-cubes. Second, we difine the large $n-$cubes $T(l;n)$, for $l\in\Z^d$, by $$T(l;n)=\{v\in\Z^d:nl-n\le v_i\le n(l+2)\text{ for any $i$}\}.$$
Finally, we difine the $n$-boxes $B^j(l;n)$, for $l\in\Z^d$ and $j\in\{1,\cdots,d\}$, as
$$B^j(l;n)=T(l;n)\cap{}T(l+2\sgn(j)\mathbf{e}_{|j|};n).$$
Note that $S(l;n)\subset{}T(l;n)$ and $B^j(l;n)$ is a box of size $3n\times\cdots\times{}3n\times{}n\times{}3n\cdots\times{}3n$, where $n$ is the length in $i$-th coordinate. For the simplicity of notation, we set $B=B^j(l;n)$. We take sufficiently large $M>0$ to be chosen later.\\

 The following is the crucial property of useful distribution.
\begin{lem}\label{useful}
If $F$ is useful, then there exsits $\delta_1>0$ and $D>0$ such that for any $v,w\in\Z^d$,
$$\P(t(v,w)<(F^-+\delta_1)|v-w|_1)\leq{}e^{-D|v-w|_1}.$$
\end{lem}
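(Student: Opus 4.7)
The plan is to split into the two cases $F^- = 0$ and $F^- > 0$, reducing each to an exponential decay estimate in a subcritical percolation model. By right-continuity of $F$ and the usefulness hypothesis, I would first choose $\delta_1 > 0$ so small that the density of edges with $\tau_e < F^- + \delta_1$ stays strictly below the relevant critical threshold: $p_c(d)$ in the first case, $\vec{p}_c(d)$ in the second. I call such edges \emph{fast} and the rest \emph{slow}. The fast edges then form a subcritical percolation (unoriented or oriented respectively), for which two-point connection probabilities decay exponentially in the $\ell_1$-distance.

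For $F^- = 0$, on the event $\{t(v,w) < \delta_1 n\}$ with $n = |v-w|_1$, any witnessing path $\Gamma$ contains strictly fewer than $n$ slow edges, since each slow edge contributes more than $\delta_1$ to $t(\Gamma)$. Cutting $\Gamma$ at its slow edges decomposes it into $s+1 \le n$ fast subpaths joining vertices $v = u_0, u_0', u_1, u_1', \ldots, u_s' = w$, with disjoint edge supports and $\sum_i |u_i - u_i'|_1 \ge n - s$. Independence across disjoint edge sets factorises the joint probability, and subcritical decay $\P(u \leftrightarrow u' \text{ via fast edges}) \le e^{-c|u-u'|_1}$ gives a bound of the form $e^{-c(n-s)}$. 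Summing over $s < n$ and over the positions of the intermediate endpoints (the geometric sum $\sum_{u'} e^{-c|u-u'|_1}$ being bounded uniformly in $u$) then yields the claimed $e^{-Dn}$, possibly after shrinking $\delta_1$ once more to absorb combinatorial factors of the form $(Cd)^s$.

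For $F^- > 0$ I exploit two a priori bounds on any witnessing $\Gamma$: the trivial lower bound $t(\Gamma) \ge F^- |\Gamma|$ forces $|\Gamma| \le (1 + \delta_1/F^-) n$, so $\Gamma$ is almost of minimal length and thus \emph{almost monotone} in the direction of $w - v$, with at most $O(\delta_1 n)$ backtracking edges; and the previous slow-edge count again gives fewer than $n$ slow edges. From the almost-monotone skeleton of fast edges I would extract a genuinely oriented fast connection of length comparable to $n$ (say in the coordinate maximising $|w_i-v_i|$), and then invoke the exponential decay of oriented subcritical percolation with parameter $F(F^- + \delta_1) < \vec{p}_c(d)$. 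A union bound over the combinatorial placement of slow edges again delivers $e^{-Dn}$.

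The main obstacle, in both cases, is the combinatorial bookkeeping needed to prevent the entropy from the locations of slow edges and variable path lengths from swamping the exponential gain from subcritical decay. In the oriented case there is the further subtlety of turning the ``almost monotone'' property into a genuinely monotone sub-connection of fast edges on which oriented percolation estimates apply; this rests on the quantitative bound $|\Gamma| - n = O(\delta_1 n)$ together with an elementary pigeonhole in one coordinate direction. Once these ingredients are in place, shrinking $\delta_1$ enough to dominate the combinatorial entropy yields a decay rate $D > 0$ that is uniform in $v, w$.
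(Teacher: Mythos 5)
The paper does not actually prove this lemma; it is imported wholesale by citing Lemma~5.5 of van den Berg--Kesten. So your proposal must be judged on its own merits, and the overall \emph{spirit} (classify edges as fast/slow, reduce to exponential decay for subcritical percolation, absorb the combinatorics) is indeed the right one and the same as BK's. But as written there is a genuine gap in the combinatorial step, and it is exactly the obstacle you yourself flag.

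The problem is quantitative. Using a single threshold $\delta_1$ for both the fast/slow split and the lemma's conclusion only yields the slow-edge bound $s<n$. After applying the BK (disjoint-occurrence) inequality to factorise the fast connections and summing over the endpoint configurations, the resulting bound has the shape
$$\sum_{s<n}\bigl(2d\,C\bigr)^{s}\,e^{-c(n-s)},$$
where $C=\sum_{u}e^{-c'|u|_1}\geq 1$ comes from the geometric sum and $2d$ from the placement of each slow edge. Since $2dC\geq 4$ always, the $s=n-1$ term alone is at least $4^{\,n-1}e^{-c}$, and the whole sum diverges; there is no exponential decay. Crucially, ``shrinking $\delta_1$'' does not repair this: making the fast density smaller raises the subcritical decay rate and so shrinks $C$, but $C\geq 1$ and $2d\geq 4$ regardless, while the cap $s<n$ is independent of $\delta_1$. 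Indeed the underlying reduction is too lossy: the event ``$\exists$ a path from $v$ to $w$ with fewer than $n$ slow edges'' has probability close to $1$ (any coordinate path with one fast edge realises it), so no honest bound on it can be exponentially small. A parallel comment applies to ``Independence across disjoint edge sets factorises the joint probability'': the fast connections are not supported on a priori fixed disjoint sets, so what one really needs is the van den Berg--Kesten disjoint-occurrence inequality, not bare independence.

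The repair is a two-threshold version of your own idea, and it is essential, not optional. Fix $\delta_2>0$ so that edges with $\tau_e<F^-+\delta_2$ are subcritical; this gives a decay rate $c>0$ and entropy constant $C<\infty$. Then choose the $\delta_1$ of the lemma to be much smaller than $\delta_2$. Now on $\{t(v,w)<(F^-+\delta_1)n\}$ one gets $t(\Gamma)\geq F^-|\Gamma|+\delta_2 s\geq F^- n+\delta_2 s$, hence $s<(\delta_1/\delta_2)n=:\rho n$, and the same sum becomes
$$\sum_{s<\rho n}\bigl(2d\,C\bigr)^{s}\,e^{-c(n-s)}\leq n\,\bigl(2dC\bigr)^{\rho n}e^{-c(1-\rho)n},$$
which is $\leq e^{-Dn}$ once $\rho\log(2dC)<c(1-\rho)$, i.e.\ once $\delta_1/\delta_2$ is small enough. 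With that change your $F^-=0$ argument closes. For the case $F^->0$ the additional step of extracting an ``oriented'' subcritical connection from the merely ``almost monotone'' path is also underspecified: a cleaner route is to cut $\Gamma$ not only at slow edges but also at the (at most $\delta_1 n/(2F^-)$) backward steps, so that the remaining fragments are genuinely oriented fast segments, and then run the same two-threshold bookkeeping with oriented subcritical decay; as stated, ``extracting a genuinely oriented fast connection of length comparable to $n$'' is a nontrivial claim that your pigeonhole sketch does not establish. By contrast, the actual BK argument avoids both issues by working with box crossings (a Peierls/renormalisation structure that this paper reuses in Section~2), which controls the entropy geometrically rather than edge by edge.
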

For the proof of this lemma, see Lemma~5.5 in \cite{BK}. We take $\delta_1>0$ as in Lemma~\ref{useful}.
 \begin{Def} Let us consider the following conditions:\\
   
\noindent (1)for any $v,w\in B^j(l;n)$ with $|v-w|_1\geq{}n^{1/3}$,
$$t(v,w)\geq{}(F^{-}+\delta_1)|v-w|_1,$$
where $\delta_1>0$ is as in Lemma~\ref{useful}.\\

\noindent(2)for any $e\cap B\neq\emptyset$, $\tau_e\leq{}M$.\\

\noindent(3)for any $e\cap B\neq\emptyset$, $\tau_e\leq{}F^+-M^{-1}$.\\

\noindent{}An $n$-box $B$ is said to be black if
$\begin{cases}
 \text{$F^+=\infty$ and }\text{(1) and (2) hold,}\\
 \text{$F^+<\infty$, $\P(\tau_e=F^+)=0$ and }\text{(1) and (3) hold, or}\\
 \text{$F^+<\infty$, $\P(\tau_e=F^+)>0$ and }\text{(1) holds}.\\
\end{cases}$
 \end{Def}
 \begin{figure}[b]
  \includegraphics[width=4.0cm]{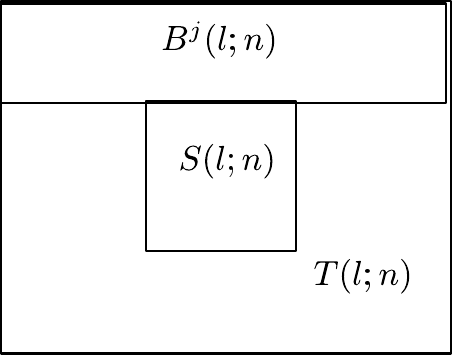}
  \hspace{8mm}
  \includegraphics[width=4.0cm]{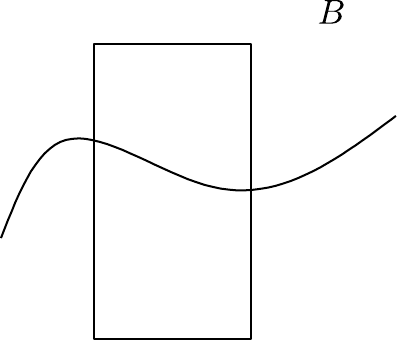}
\caption{}
Left: Boxes: $S$, $T$, $B$.\\
Right: $\mathbb{O}_N$ crosses a $n$-box in the short direction.\\
  \label{fig:two}
\end{figure}
 It is easy to check that if $M=M(n)$ is suffiently large, then $\P(B\text{ is black})\to{}1$ as $n\to\infty$. Indeed the first condition which appears in definition of blackness holds with high probability by using Lemma~\ref{useful} (See also (5.5), (5.31) and (5.32) in \cite{BK}). Together with a similar argument (Peierls argument) of (5.2) in \cite{BK}, the following lemma follows. 
 \begin{lem}\label{lem:p}
There exist $\epsilon,D,n_1,M_1>0$ such that for any $N\in\N$, $n\geq n_1$ and $M\geq M_1$,\\
 $$\text{$\P(\exists$ path from $0\to{}N\mathbf{e}_1$ which visits at most $\epsilon{}N$ distinct black $n$-cubes$)\leq{}e^{-DN}$}$$
 \end{lem}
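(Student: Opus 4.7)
The plan is a coarse-grained Peierls argument on the lattice of $n$-cubes, in the spirit of (5.2) in \cite{BK}. The starting observation is deterministic: any path from $0$ to $N\mathbf{e}_1$ visits at least $\lfloor N/n \rfloor$ distinct $n$-cubes, since its projection onto the first axis enters each slab $\{nk \leq x_1 < n(k+1)\}$ for $0 \leq k < \lfloor N/n \rfloor$. So for $\epsilon$ chosen small relative to $1/n_1$, a path with at most $\epsilon N$ distinct black $n$-cubes has at least $m/2$ non-black (\emph{white}) cubes among the $m \geq \lfloor N/n \rfloor$ it visits. The set of distinct $n$-cubes visited forms a connected subset $S$ of the coarse lattice $\Z^d$ containing the origin, and the number of such subsets of size $m$ is bounded by $C(d)^m$ by the standard lattice-animal count.

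For each fixed coarse shape $S$ of size $m$ one must bound the probability that at least $m/2$ of the corresponding $n$-boxes are not black. The obstacle is the lack of independence, since $n$-boxes at neighbouring coarse sites share edges. I would fix an integer $k = k(d)$ large enough that whenever $|l - l'|_\infty \geq k$ the boxes $B^j(l;n)$ and $B^{j'}(l';n)$ are edge-disjoint for all $j,j'$, and then partition $\Z^d$ into $k^d$ residue classes mod $k$. Within each residue class the ``not black'' events are independent, each with one-box probability $p_0 := p_0(n,M)$. Pigeonholing among the $\geq m/2$ white cubes in $S$ produces a residue class containing at least $m/(2k^d)$ of them, and an internal union bound over which cubes of that class are white gives
$$\P\bigl(\text{at least } m/2 \text{ of } S \text{ are white}\bigr) \;\leq\; k^d\, 2^m\, p_0^{m/(2k^d)}.$$

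The single-box estimate $p_0(n,M) \to 0$ as $n,M \to \infty$ is precisely the content of the discussion preceding the lemma: condition (1) is handled by Lemma~\ref{useful} together with a polynomial union bound over pairs of vertices in $B$, while (2), (3) are controlled directly by the tail or support of $F$. Summing the shape count against the per-shape estimate yields
$$\P(\cdots) \;\leq\; \sum_{m \geq \lfloor N/n \rfloor} \bigl(2 C(d)\bigr)^m\, k^d\, p_0^{m/(2k^d)},$$
which decays like $e^{-DN}$ once $p_0$ is small enough that $(2C(d))^{2k^d}\, p_0 < 1/2$. The main obstacle is keeping the parameters in the correct order: fix $k = k(d)$ first, then take $n_1, M_1$ large enough that $p_0$ beats the combinatorial factor $(2C(d))^m$, and finally choose $\epsilon$ small in terms of $n_1$ so that the ``half are white'' reduction is valid. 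Each individual ingredient is standard; the care lies in ensuring these three choices are compatible.
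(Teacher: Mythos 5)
Your proposal implements exactly the Peierls argument that the paper invokes from (5.2) of \cite{BK}: the coarse-graining onto the lattice of $n$-cubes, the lattice-animal count $C(d)^m$, the residue-class partition (here mod~$k$ for a fixed $k=k(d)$) to restore independence of the ``not black'' events, and the resulting geometric series beating the combinatorial entropy once $p_0(n,M)$ is small. So you are on the same route as the paper, which gives no independent proof and simply points to van den Berg--Kesten.

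One point worth flagging, more as a defect in the lemma statement than in your argument. The deterministic bound $m\geq\lfloor N/n\rfloor$ forces, for the ``at most $\epsilon N$ black implies at least $m/2$ white'' reduction, that $\epsilon\leq 1/(2n)$. Thus the $\epsilon$ you produce necessarily degrades as $n$ grows, and for $n\gg 1/\epsilon$ the event ``the path visits at most $\epsilon N$ distinct black $n$-cubes'' is satisfied by \emph{every} path (there are simply too few $n$-cubes), so no exponential bound can hold. Your proof therefore establishes the lemma for $n$ comparable to $n_1$, not for all $n\geq n_1$ with a single uniform $\epsilon$ as the statement literally asks. Since the proof of Lemma~\ref{lem:exp2} fixes $n$ once and for all (``we take $n>0$ sufficiently large depending on $F$ but not $N$''), this imprecision is inherited from the paper's formulation and does not impede the downstream application; still, a careful write-up should either let $\epsilon$ depend on $n$, or fix $n=n_1$ and restate the conclusion accordingly.
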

 
A path which starts in $S(l;n)$ and ends outside of $T(l;N)$ must have a segment which lies entirely in one of the surrounding $n$-boxes, and which connects the two opposite large faces of that $n$-box. This means that this path crosses at least one $n$-box in the short direction (See Figure \ref{fig:two}). Hereafter ``crossing an $n$-box" means crossing in the short direction.
\begin{Def}
  ~\\
\noindent{}An $n$-box $B$ is said to be white if
there exists $\Gamma\in\mathbb{O}_N$ such that $\Gamma$ cross $B$. \\

\noindent{}An $n$-box $B$ is said to be gray if $B$ is black and white.

 \end{Def}

From these observations together with Lemma~\ref{lem:p}, we obtain
\begin{equation}\label{numb}
  \E[\sharp\{\text{distinct gray $n$-box $B$} \}]\geq{}\epsilon{}N/2
  \end{equation}

\begin{Def}
  Define\\
  
  \noindent$$F^+_M=\begin{cases}
  \text{$M$}&\text{ if $F^+=\infty$,}\\
  \text{$F^+-M^{-2}$}&\text{ if $F^+<\infty$ and $F(\{F^+\})=0$,}\\
  \text{$F^+$}&\text{ if $F^+<\infty$ and $F(\{F^+)>0$,}\\
  \end{cases}$$
  and\\
  $$F^-_M=\begin{cases}
  \text{$F^-+M^{-2}$}&\text{ if $F(\{F^-\})=0$,}\\
  \text{$F^-$}&\text{ if $F(\{F^-\})>0$.}
\end{cases}$$
\end{Def}
Note that if $M$ is sufficiently large,
\begin{equation}\label{estimate}
  F^-_M<F^-+\delta_1/2< F^+_M\text{ and }F^-_M\leq \alpha\leq F^+_M,
\end{equation}
where $\alpha$ first appears in Theorem~\ref{thm:number}.
\begin{Def}
  ~\\
 A point $x\in\Z^d$ is said to be a turn for $\gamma=(x_0,\cdots,x_{|\gamma|})$ if there exists $i\in\{1,\cdots,l-1\}$ such that $x=x_i$, $x_i-x_{i-1}$ is perpendicular to $x_{i+1}-x_i$. Otherwise, we say that $x_i$ is flat.\\
  \noindent An $n$-box $B$ is called $\mathbb{G}$--turn if for any $\Gamma\in\mathbb{O}^+_N$, there exists $x\in B$ such that $x$ is $\mathbb{G}$--turn for $\Gamma$.
 \end{Def}

     Denote by $\partial^+ B$ the outer boundary of $B$. The following lemma will be used in Definition \ref{gamab}.
  \begin{lem}\label{conc:gam}
    Suppose that $F^+<\infty$. If we take $n$ sufficiently large, then for any $a,b\in\partial^+ B$ with \\
    $|a-b|_1\geq{}\delta_1n/(2F^+)=:C(\delta_1,F^+)n$, there exists a self-avoiding path $\gamma_{a,b}=(x_0,\cdots,x_{|\gamma_{a,b}|})$ from $a$ to $b$ satisfying $\{x_i\}^{|\gamma_{a,b}|-1}_{i=1}\subset B$ such that the following hold:
   \begin{equation}\label{gam}
     \begin{array}{l}
       \text{ (1) $|x_i-x_j|_1=|i-j|$ if $|i-j|\leq 12dn^{1/3}$},\\
       \text{ (2) $|x_i-x_j|_1\geq|i-j|-C\sqrt{|i-j|}$ for any $i,j$, where $C:={}800d\left(1+C(\delta_1,F^+)^{-1/2}\right)$},\\
       \text{ (3) if  $0\leq i<j\leq{}|\gamma_{a,b}|$ and $|i-j|\geq{}3\sqrt{n}$, $\exists q\in\{i+1,\cdots,j-1\}$ s.t. $x_q$ is turn for $\gamma_{a,b}$},\\
       \text{ (4) if $x_p$ and $x_q$ are distinct turns for $\gamma_{a,b}$, $|x_p-x_q|_1>{}4$},\\
       \text{ (5) $|\gamma_{a,b}|\leq |a-b|_1+100d\sqrt{n} $},\\
       \text{ (6) $d_{\infty}(x_i,B^c)\geq{}4dn^{1/3}$ for $2dn^{1/2}\leq i\leq{}|\gamma_{a,b}|-2dn^{1/2}$},\\
       \text{ (7) for any $i<j$ with $|i-j|\leq{}\sqrt{n/2}$, $\sharp \{i\leq p \leq j|~\text{$x_p$ is turn for $\gamma_{a,b}$}\}\leq 2$},\\
       \end{array}
     \end{equation}    
  \end{lem}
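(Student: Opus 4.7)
The plan is an explicit construction of $\gamma_{a,b}$ in three stages. By translation, coordinate permutation, and coordinate reflection---each of which preserves the statement---I may assume $B=[0,n]\times[0,3n]^{d-1}$ with the short direction being $\mathbf{e}_1$, and that $b-a$ has non-negative components. I would build $\gamma_{a,b}$ as: (i) a short entry staircase from $a$ to an interior point $a'$ of length at most $2dn^{1/2}$, with $d_{\infty}(a',\partial B)\geq 5dn^{1/3}$; (ii) a main body from $a'$ to an analogously chosen $b'$; and (iii) a short exit staircase from $b'$ to $b$. The entry and exit stages are themselves small staircases whose turns are spaced at most $\sqrt{n}$ apart, so that (3) holds in all three stages without any turn-free initial segment of length exceeding $3\sqrt{n}$.

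The main body is the crux. Fix $L=\lceil\sqrt{n}/C_0\rceil$ with a constant $C_0$ chosen so that $L\in[12dn^{1/3},\,3\sqrt{n}]$, and set $r=5$. Write $\Delta=b'-a'$, whose components are non-negative. If $\Delta$ has two or more components of size $\geq\sqrt{n}$, say $\Delta_{j_1},\Delta_{j_2}\geq\sqrt{n}$, I construct a pure staircase alternating $+\mathbf{e}_{j_1}$- and $+\mathbf{e}_{j_2}$-blocks of length $L$ in the ratio $\Delta_{j_1}:\Delta_{j_2}$; all steps lie in non-negative coordinate directions, so local $\ell^1$-monotonicity is automatic. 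If instead $\Delta$ has only one large component (say $\Delta_j\ge\sqrt n$ dominates), I pick an auxiliary direction $\mathbf{e}_k\neq\mathbf{e}_j$ and interpose $\pm\mathbf{e}_k$-blocks of length $r$ between $+\mathbf{e}_j$-blocks of length $L$, with alternating signs so that the $\mathbf{e}_k$-coordinate oscillates within a band of width $r$. The essential spacing invariant is that any adjacent $+\mathbf{e}_k$- and $-\mathbf{e}_k$-block are separated by at least one $+\mathbf{e}_j$-block of length $L\geq 12dn^{1/3}$, so that no window of $12dn^{1/3}$ consecutive indices contains $\mathbf{e}_k$-steps of both signs.

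Each of (1)--(7) is then verified by direct inspection. Items (3), (4), (7) follow because turn gaps alternate between $r=5$ and $L\in(4,3\sqrt{n})$, giving maximum gap at most $3\sqrt n$ while forcing three consecutive turns to span more than $\sqrt{n/2}$. Item (1) follows from the spacing invariant above (in the two-direction case there is nothing to check). Item (5) counts the excess length as $2r$ per cycle times at most $|a-b|_1/(2L)$ cycles, which is $O(\sqrt{n})$ since $|a-b|_1=O(n)$. Item (6) holds because the main body stays within $r$ of the straight segment $[a',b']$ in the $\mathbf{e}_k$-direction. The most delicate is (2): the wasted length $|i-j|-|x_i-x_j|_1$ on a sub-segment $[i,j]$ equals $2\min(\Delta_k^+,\Delta_k^-)$, counting $\pm\mathbf{e}_k$-steps within $[i,j]$, and the alternating sign pattern bounds this by roughly $|i-j|/L$, which is at most $C\sqrt{|i-j|}$ for the claimed constant $C$, using $|i-j|\leq O(n)$ and $L\sim\sqrt{n}$. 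The main obstacle is the one-direction case, where the competing requirements---density of turns (3), local monotonicity (1), and the length budget (5)---force the scale $L\sim\sqrt{n}$ as essentially the only viable choice, and the alternating sign pattern for the $\pm\mathbf{e}_k$-blocks is what simultaneously renders (2) and (6) compatible with frequent turns.
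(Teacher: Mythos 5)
Your construction is in the same spirit as the paper's: both produce a near-geodesic staircase through $B$ with regularly spaced, bounded-size detours, and both verify (2) and (5) via the same waste-versus-length count with ratio $r/L\sim 1/\sqrt n$. The organizational difference is that the paper first posits a coarse base path $\tilde\gamma$ (turns spaced $\geq\sqrt n$ apart, length $\leq |a-b|_1+10d\sqrt n$, interior after $2dn^{1/3}$ steps) and then performs local surgery, attaching small ``rough boxes'' in the middle of any flat stretch longer than $3\sqrt n$; this makes (1), (2), (5), (6) essentially inherited from $\tilde\gamma$, with only the turn-density items (3), (4), (7) needing a post-surgery check. You instead lay down the fine-grained alternating-block pattern from the start, which is more explicit but means you must carry all seven properties simultaneously through the entry leg, main body, and exit leg, and reconcile the two regimes (one versus several large coordinates of $b-a$) by hand. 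Both routes lead to the same kind of path and both are sound.

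Two details in your write-up need tightening. First, when $b-a$ has three or more coordinates of size $\geq\sqrt n$, your alternation between only $\mathbf e_{j_1}$ and $\mathbf e_{j_2}$ never traverses the third, so $b'$ is unreachable; you must cycle among \emph{all} large directions (this is still monotone, so (1) is unaffected and (7) only improves). Second, the scale $L=\lceil\sqrt n/C_0\rceil$ has to satisfy both $L\leq 3\sqrt n$ for (3) and $L+r>\sqrt{n/2}$ for (7), which pins $C_0$ down to roughly the interval $(1/3,\sqrt 2)$; stating a concrete choice such as $C_0=1$ would close this. Neither point is an obstruction, but both are load-bearing in the one-direction case and should be made explicit.
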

  The reason why we use $\sqrt{n}$ and $n^{1/3}$ above is just $n^{1/3}\ll\sqrt{n}\ll n$ and not important.
  \begin{proof}
    First we take a self-avoiding path $\tilde{\gamma}=(\tilde{x}_0,\cdots,\tilde{x}_l)$ from $a$ to $b$ so that
    \begin{enumerate}
    \item $\forall i,j$ with $|i-j|\leq |a-b|_1/4$, $|\tilde{x}_i-\tilde{x}_j|_1=|i-j|$,
      \item If neither $\tilde{x}_i$ nor $\tilde{x}_j$ is flat for $\tilde{\gamma}$, $|\tilde{x}_i-\tilde{x}_j|_1\geq \sqrt{n}$,
    \item $|\tilde{\gamma}|\leq |a-b|_1+10d\sqrt{n}$,
    \item If $2dn^{1/3}\leq i \leq |\tilde{\gamma}|-2dn^{1/3}$, then $d_{\infty}(\tilde{x}_i,B^c)\geq 8dn^{1/3}$.
      \end{enumerate}
    Then we consider rough boxes as in Figure \ref{fig:three}. For the construction of $\gamma_{a,b}$, we attach rough box in the middle of the $3\sqrt{n}$ successive flat points of $\tilde{\gamma}$ if they exist and we continue the procedure until they vanish. We take $\gamma_{a,b}=(x_0,\cdots,x_{|\gamma_{a,b}})$ which was obtained by the above procedure eventually. Note that the number of rough boxes attached in this procedure is at most $|\tilde{\gamma}|/\sqrt{n}$ $(\leq 6d\sqrt{n}).$ Thus, $|\gamma_{a,b}|\leq 5\times 2\times 6d\sqrt{n}+|\tilde{\gamma}|\leq |a-b|_1+100d\sqrt{n}$ which implies (5). (1), (3), (4), (6), (7) are trivial by the way of construction. Finally, we will prove (2). If $|i-j|\leq |a-b|_1/8$, then
    $$|x_i-x_j|\geq |i-j|-(5\times 2\times |i-j|/\sqrt{n})\geq |i-j|-20d\sqrt{|i-j|}.$$
    Otherwise, since $|i-j|-|x_i-x_j|_1\leq |\gamma_{a,b}|-|a-b|_1\leq 100d\sqrt{n}$ and $|a-b|_1\geq{}C(\delta_1,F^+)n$,
    $$|x_i-x_j|\geq |i-j|-100d\sqrt{n}\geq |i-j|-\frac{800d}{\sqrt{C(\delta_1,F^+)}}\sqrt{|i-j|}.$$
  \end{proof}
  For general $a,b$, we get the following lemma.
  \begin{lem}\label{conc:gam2}
    For any $a,b\in\partial^+ B$ with $a\neq b$, there exists a self-avoiding path $\gamma_{a,b}=(x_0,\cdots,x_{|\gamma_{a,b}|})$ from $a$ to $b$ satisfying $\{x_i\}^{|\gamma_{a,b}|-1}_{i=1}\subset B$ such that the following hold:
     \begin{enumerate}
     \item there exists at least one turn $x_i$ for $\gamma_{a,b}$ such that $d_{\infty}(x_i,B^c)\geq{}2$,
       \item $|a-b|_1+4d\sqrt{n}\geq |\gamma_{a,b}|$.
       \end{enumerate}
  \end{lem}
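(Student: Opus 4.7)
The plan is to connect $a$ and $b$ by a near-monotone path that enters $B$ via a neighbor of $a$, crosses to a neighbor of $b$, and includes a small detour to produce an interior turn.

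Choose $a'\in B$ adjacent to $a$ and $b'\in B$ adjacent to $b$; these exist since $a,b\in\partial^+ B$. Because $B$ is a rectangular box, every coordinate-monotone lattice path from $a'$ to $b'$ stays inside $B$ and has length $|a'-b'|_1\le|a-b|_1+2$. Prepending the edge $(a,a')$ and appending $(b',b)$ gives a self-avoiding path of length at most $|a-b|_1+4$, which is far below the budget $|a-b|_1+4d\sqrt n$ and already establishes condition~(2).

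To satisfy (1), I would modify the monotone path as follows. If $a'$ and $b'$ differ in at least two coordinates, the monotone path has a turn at each coordinate-switch, and by choosing the order of coordinate increments one can position a switch at a vertex of $d_\infty$-distance $\ge 2$ from $\partial^+B$. This is always possible for $n$ sufficiently large since $B$ has side length at least $n$ in every direction, so the set of depth-$\ge 2$ vertices is nonempty and a monotone path may be routed through it. If $a'$ and $b'$ differ in only one coordinate, the monotone path has no turn, and I insert a local bump: for some direction $e_k$ orthogonal to $b'-a'$ pointing into the interior of $B$, replace two consecutive path vertices $x_i,x_{i+1}$ by $x_i,x_i+e_k,x_i+2e_k,x_{i+1}+2e_k,x_{i+1}+e_k,x_{i+1}$. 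This adds only $4$ to the length and produces a turn at $x_i+2e_k$, whose depth is $\ge 2$ once the bump is centered where the original path is already at depth $\ge 2$ in the remaining coordinates.

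The main obstacle is the degenerate ``corner'' configuration in which $a$ and $b$ are both adjacent to the same corner vertex of $B$, forcing $a'=b'$ and leaving no room, under self-avoidance, for any detour through $B$'s interior. This occurs only when $|a-b|_1=2$ with $a$ and $b$ attached orthogonally at a single corner; such configurations must either be excluded from the scope of the statement or handled by a slightly weaker interpretation of~(1) (e.g., depth~$1$ when $|a-b|_1=2$), which still suffices for the later application since $n$ is eventually taken large. Throughout the construction, self-avoidance is preserved by the monotone structure and the local nature of the bump, and the length bound follows from the $O(1)$ detour cost, completing the proof.
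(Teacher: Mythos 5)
Your construction---monotone path between a neighbor $a'\in B$ of $a$ and a neighbor $b'\in B$ of $b$, with a small local bump to manufacture an interior turn---is a sound way to approach this statement, your self-avoidance and length accounting are correct, and the path you build is comfortably inside the $|a-b|_1+4d\sqrt n$ budget. The paper itself only remarks that the proof is ``the same (or much simpler)'' as that of Lemma~\ref{conc:gam}, and the latter is proved under the hypothesis $|a-b|_1\geq C(\delta_1,F^+)n$, so the paper never actually engages with the small-$|a-b|_1$ regime.

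The corner configuration you flag is not a loose end one may wave away; it is a genuine counterexample to the lemma as written, and your proposed weakening of (1) to depth $1$ does not repair matters. Concretely, let $v$ be a corner vertex of $B$ and take $a,b\in\partial^+ B$ with $a-v$ and $b-v$ two orthogonal unit vectors pointing out of $B$. Then $a\neq b$, $|a-b|_1=2$, and the unique vertex of $B$ adjacent to $a$ is $v$, likewise for $b$. Hence the unique self-avoiding path from $a$ to $b$ with interior in $B$ is $a\to v\to b$; its only turn is at $v$ and $d_\infty(v,B^c)=1$, so condition (1) has no witness at all and the lemma fails for this pair. Moreover, depth $1$ cannot ``still suffice for the later application'': what depth $\geq 2$ actually buys in Lemma~\ref{ineq} is that $x_i^*\in B$ (since $d_\infty(x_i,x_i^*)=1$), so that both reflected edges $\{x_{i-1},x_i^*\}$ and $\{x_i^*,x_{i+1}\}$ meet $B$ and hence have their $\tau^B$-values prescribed by the $(\gamma,B)$-condition on $\tau^*$. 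In the corner configuration, $x_i^*=a+b-v\notin B$ and neither reflected edge meets $B$, so $\tau^B$ retains the unresampled values $\tau$ there and the $\mathbb{G}$-turn argument collapses. The right fix is structural: one must exclude this corner configuration from the scope of the lemma and check separately that it does not occur (or handle it by a slight perturbation of $\mathrm{st}(\Gamma,B),\mathrm{fin}(\Gamma,B)$) in the place where the lemma is invoked; weakening (1) or appealing to $n$ large does not close the gap.
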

  The proof is the same (or much simpler) as in Lemma~\ref{conc:gam}, so we skip the details.
  \begin{figure}[h]
  
   \includegraphics[width=4.0cm]{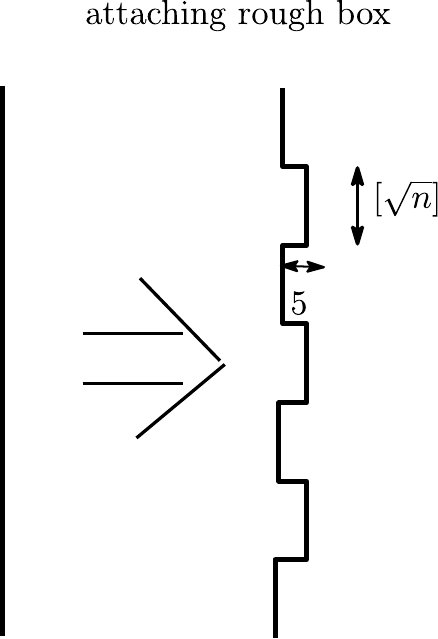}
  \hspace{10mm}
  \includegraphics[width=4.0cm]{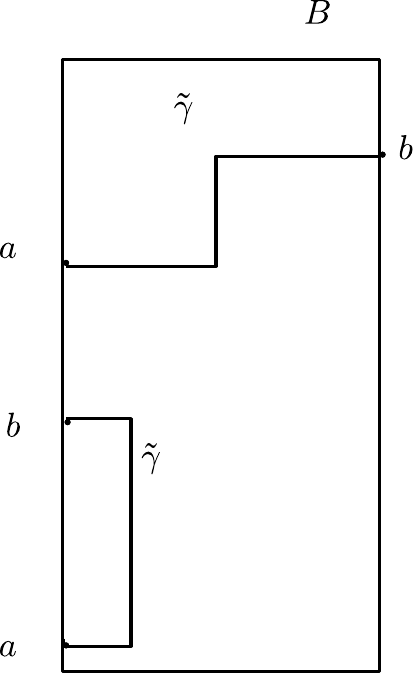}
  \hspace{10mm}
  \includegraphics[width=4.0cm]{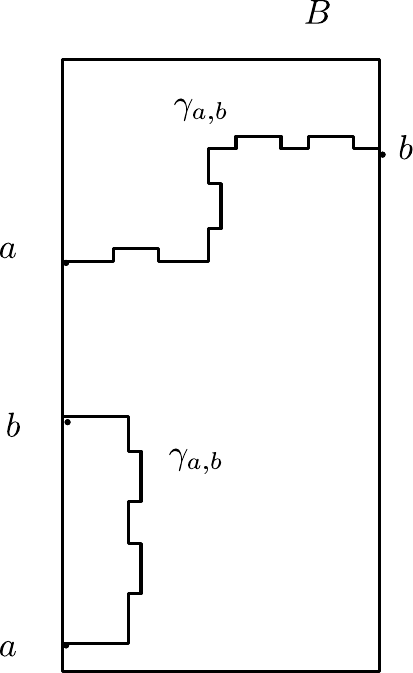}
\caption{}
Left: rough boxes. Middle: Examples of $\tilde{\gamma}$. Right: Examples of $\gamma_{a,b}$.
  \label{fig:three}
\end{figure}
 \begin{Def}\label{gamab}
   For any $a,b\in\partial^+ B$ with $|a-b|_1\geq{}\delta_1n/(2F^+)+1$, we take a self avoiding path $\gamma_{a,b}=(x_0,\cdots,x_{|\gamma_{a,b}|})$ from $a$ to $b$ with $\{x_i\}^{|\gamma_{a,b}|-1}_{i=1}\subset B$ so that if $F^+=\infty$, then Lemma~\ref{conc:gam2} holds and otherwise, if $F^+=\infty$, then Lemma~\ref{conc:gam} holds.\\
   
   \noindent If $|a-b|_1<\frac{\delta_1n}{2F^+}+1$, then we take an arbitrary selfavoiding path $\gamma_{a,b}\subset B\cap\partial^+ B$ from $a$ to $b$.
   
 \end{Def}
 ~\\
 
 Given a  path $\gamma=\gamma_{a,b}=(x_0,\cdots,x_{|\gamma|})$ and $n$-box $B$, $\tau$ is said to be satisfied $(\gamma,B)$-condition if (1) $\tau(x_{i-1},x_i)=\tau(x_{i},x_{i+1})=\tau(x_{i-1},x_{i}^*)=\tau(x_{i}^*,x_{i+1})=\alpha$ if $x_i$ is turn for $\gamma$, (2) $\tau_e\leq F^-_M$ for $e\in\gamma$ with $e\notin \{\{x_{i-1},x_i\}|~\text{$x_i$ is turn for $\gamma$}\}\cup\{\{x_i,x_{i+1}\}|~\text{$x_i$ is turn for $\gamma$}\}$, (3) $\tau_e\geq{}F^+_M$ for other edges with $e\cap B\neq\emptyset$. Denote the independent copy of $\tau$ by $\tau^*$ and set $\tau^{B}$ as $\tau^{B}_e=\tau_e^{*}$ if $e\cap{}B\neq\emptyset$, $\tau^{B}_e=\tau_e$ otherwise. Let $(\tilde{a},\tilde{b})$ be random variable on $\partial^+ B\times \partial^+ B$ with uniform distribution and its probability measure $P$. Given a path $\Gamma=(x_0,\cdots,x_l)$ and a $n$-box $B$, we set
 $$\text{st}(\Gamma,B)=x_{\min\{i|~x_i\in \partial^+ B\}}\text{, fin}(\Gamma,B)=x_{\max\{i|~x_i\in \partial^+ B\}}.$$
 Note that if $\Gamma$ cross $B$ and $B$ is black, since $\text{$t^+($st$(\Gamma,B)$,fin$(\Gamma,B))$}\geq{}(F^-+\delta_1)n$, $$\text{$|$st$(\Gamma,B)$-fin$(\Gamma,B)$}|_1\geq \frac{(F^-+\delta_1)n}{2F^+}+1.$$
 Here the above inequality holds even if $F^+=\infty$.
 \begin{lem}\label{ineq}
 We take $\beta=M^{-2}$. If $M\geq{}n^{2d}$ and $n$ is sufficiently large, there exists $c>0$ such that for any $N\in\N$, unless $0\in B$ or $N\mathbf{e}_1\in B$,
 \begin{equation}\label{crucial2}
\begin{split}
   &\P(\text{ $B$ is $\mathbb{G}$--turn for $\tau$})=P\otimes\P(\text{ $B$ is $\mathbb{G}$--turn for $\tau^{B}$})\\
  & \geq{}P\otimes\P\left(
    \begin{array}{c}
      \text{$B$ is gray for $\tau$, $\exists\Gamma\in\mathbb{O}^+_N$ s.t. $\Gamma$ cross $B$,} \\
      \text{$(\tilde{a},\tilde{b})=(\text{\rm{st}}(\Gamma,B),\text{\rm{fin}}(\Gamma,B)))$ and $\tau^*$ satisfies $(\gamma_{\tilde{a},\tilde{b}}$,$B)$-condition}\\
\end{array}
  \right)\\
  &=\frac{1}{|\partial^+{}B|^2}\sum_{(a,b)}\P\left(
  \begin{array}{c}
    \text{$B$ is gray for $\tau$, $\exists\Gamma\in\mathbb{O}^+_N$ s.t. $\Gamma$ cross $B$,}\\
    \text{$(a,b)=(\text{\rm{st}}(\Gamma,B),\text{\rm{fin}}(\Gamma,B)))$, $\tau^*$ satisfies $(\gamma_{a,b}$,$B)$-cond.}\\
    \end{array}\right)\\
 &\geq{}c\P(\text{$B$ is gray}).\\
\end{split}
 \end{equation}
 \end{lem}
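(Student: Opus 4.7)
The equality on the first line follows at once from $\tau^B \stackrel{d}{=} \tau$ (since $\tau^*$ is an independent copy) together with the fact that $\{B\text{ is }\mathbb{G}\text{--turn for }\tau^B\}$ is measurable in $\tau^B$ alone, so the auxiliary pair $(\tilde a,\tilde b)$ integrates out trivially. The equality on the third line is an unpacking of $P\otimes \P$: the events indexed by distinct $(a,b)$ are disjoint (each requires $(\tilde a,\tilde b)=(a,b)$) and $P(\tilde a=a,\tilde b=b)=|\partial^+B|^{-2}$.

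The main step is the second inequality, for which I would show that on the event inside the probability, $B$ is $\mathbb{G}$--turn for $\tau^B$. Assuming $B$ gray for $\tau$, fix a canonical $\Gamma\in\mathbb{O}^+_N$ crossing $B$ with $(a,b)=(\text{st}(\Gamma,B),\text{fin}(\Gamma,B))$, and form $\bar\Gamma=\Gamma[0,a]\oplus\gamma_{a,b}\oplus\Gamma[b,N\mathbf{e}_1]$. Under $\tau^B$ the $(\gamma_{a,b},B)$-condition bounds the cost of $\bar\Gamma$ inside $B$ by $F^-_M|\gamma_{a,b}|+O(\sqrt n)(\alpha+\beta)$ (using Lemma~\ref{conc:gam}(5): $|\gamma_{a,b}|\leq|a-b|_1+100d\sqrt n$), while blackness condition (1) together with Lemma~\ref{useful} gives $t_\tau(\Gamma[a,b])\geq(F^-+\delta_1)|a-b|_1$, valid because $|a-b|_1\geq(F^-+\delta_1)n/(2F^+)\geq n^{1/3}$ for $n$ large. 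With $\beta=M^{-2}$, $M\geq n^{2d}$, and $n$ large, this yields $t^+_{\tau^B}(\bar\Gamma)\leq t^+_\tau(\Gamma)-cn$ for some $c>0$. Now let $\Gamma'$ be any $t^+$-optimal path for $\tau^B$: if $\Gamma'$ avoids $B$ then $t^+_{\tau^B}(\Gamma')=t^+_\tau(\Gamma')\geq t^+_\tau(\Gamma)>t^+_{\tau^B}(\bar\Gamma)$, contradicting optimality. So $\Gamma'$ crosses $B$; inside $B$, condition (3) forces any edge off $\gamma_{a,b}$ to cost at least $F^+_M$ while $\gamma_{a,b}$-edges cost at most $\max(F^-_M,\alpha)$, and for $M$ large even a single deviation is prohibitively expensive. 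Hence $\Gamma'$ traces $\gamma_{a,b}$ up to turn reflections $x_i\leftrightarrow x_i^*$; condition (1) ensures the four relevant local passage times all equal $\alpha$, and properties (4), (6) of Lemma~\ref{conc:gam} give $x_i^*\notin\gamma_{a,b}$, so each turn $x_i\in B$ is a $\mathbb{G}$--turn for $\Gamma'$, proving $B$ is $\mathbb{G}$--turn for $\tau^B$.

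For the last inequality, independence of $\tau^*$ from $\tau$ factors
\[
\P(B\text{ gray},\text{st}=a,\text{fin}=b,\tau^*\text{ satisfies})=\P(B\text{ gray},\text{st}=a,\text{fin}=b)\cdot\P(\tau^*\text{ satisfies }(\gamma_{a,b},B)\text{-cond.}).
\]
The condition on $\tau^*$ constrains the $O(n^d)$ edges meeting $B$ to value-sets of positive probability (by the definitions of $F^{\pm}_M$ and the hypothesis $\P(\tau_e=\alpha)>0$), so $\P(\tau^*\text{ satisfies})\geq c_{M,n}>0$. Summing over $(a,b)$ then uses $\sum_{(a,b)}\P(B\text{ gray},\text{st}=a,\text{fin}=b)=\P(B\text{ gray})$, and dividing by $|\partial^+B|^2$ (bounded in terms of $n$ alone) gives the claimed $\geq c\,\P(B\text{ gray})$. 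The principal obstacle is the path-forcing in paragraph two: the calibration $\beta=M^{-2}$, $M\geq n^{2d}$ must be tight enough that the $O(\sqrt n)\beta$ turn penalty and the $M^{-2}|a-b|_1$ slack from replacing $F^-$ by $F^-_M$ stay strictly below the $\delta_1|a-b|_1$ saving from the black-box bound, uniformly in $N$.
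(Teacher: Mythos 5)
Your outer structure is correct: the first equality is distributional, the third line just conditions on $(\tilde a,\tilde b)$, and the final inequality is a positive-probability bound that factors by independence of $\tau^*$ from $\tau$. The crux, as you rightly say, is the second inequality, i.e. showing that on the displayed event $B$ is $\mathbb{G}$--turn for $\tau^B$. Your argument for this, however, only covers (implicitly) the case $F^+=\infty$.

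You claim that ``inside $B$, condition (3) forces any edge off $\gamma_{a,b}$ to cost at least $F^+_M$ \dots\ and for $M$ large even a single deviation is prohibitively expensive.'' That is true only if $F^+_M$ can be made arbitrarily large, which happens precisely when $F^+=\infty$ (there $F^+_M=M$). When $F^+<\infty$, $F^+_M\leq F^+$ is a fixed finite number, so an off-$\gamma$ edge costs at most $F^+-F^-$ more than an on-$\gamma$ edge; a single deviation is not even expensive, let alone prohibitive. In that regime one cannot conclude that $\Gamma'$ traces $\gamma_{a,b}$ up to turn reflections — at best a positive \emph{fraction} of its steps inside $B$ lie on $\gamma_{a,b}\cup\gamma_{a,b}^*$, and that fraction must be extracted by a macroscopic energy comparison (the $\delta_1 n/4$ vs.\ $O(F^+\sqrt n)$ bound) followed by a careful excursion control (the role of Lemma~\ref{lem:detour}). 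Without that argument, the bounded-$F^+$ case, which is where the quantitative properties (1)--(7) of $\gamma_{a,b}$ are actually needed, is left unproved, and this is the genuinely hard part of the lemma.

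A smaller but real issue: the definition of $\mathbb{G}$--turn for a path $\Gamma'$ requires $x_i^*\notin\Gamma'$, not merely $x_i^*\notin\gamma_{a,b}$. You check only the latter. The paper closes this by using the local structure of $\gamma_{a,b}$ together with the optimality of $\Gamma^B$ and the $(\gamma,B)$-condition to rule out $\Gamma^B$ ever visiting $x_i^*$; you would need some version of that argument to conclude.
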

 \begin{proof}
   We first prove the first inequality. We suppose that $B$ is gray for $\tau$, there exists $\Gamma\in\mathbb{O}^+_N$ such that $(\tilde{a},\tilde{b})=(\text{st}(\Gamma,B),\text{fin}(\Gamma,B)))$ and $\tau^*$ satisfy $(\gamma_{\tilde{a},\tilde{b}}$,$B)$--condition.  We write $\gamma=\gamma_{\tilde{a},\tilde{b}}=(x_0,\cdots,x_{|\gamma|})$, $a=\tilde{a}$ and $b=\tilde{b}$ for simplicity. Let us define $\gamma^*$ as $\gamma^*=(x_i^*)^{|\gamma|}_{i=0}$. Our first goal is to prove that $B$ is $\mathbb{G}$--turn for $\tau^{B}$ under this condition. To this end, we take $\Gamma^B$ to be an optimal path for the attached first passage time with respect to $\tau^B$. \\

   First we consider the case $F^+=\infty$. Let us denote by $t^{B,+}$ the attached first passage time with respect to $\tau_e^B$. The $(\gamma$,$B)$--condition and blackness of $B$ lead to
 \begin{equation}\label{inf1}
   \begin{split}
     t^{B,+}(0,N\mathbf{e}_1)&\leq t^{B,+}(\Gamma[0,a]\oplus \gamma \oplus\Gamma[b,N\mathbf{e}_1])\\
     &\leq t^{B,+}(\Gamma[0,a])+ (F^-+\delta_1/2)|a-b|_1 + t^{B,+}(\Gamma[b,N\mathbf{e}_1])+2d\beta |B|\\
     &< t^{+}(\Gamma[0,a])+ (F^-+\delta_1)|a-b|_1 + t^{+}(\Gamma[b,N\mathbf{e}_1])\\
     &\leq t^+(\Gamma)=t^+(0,N\mathbf{e}_1).
  \end{split}
 \end{equation}
 Therefore, $\Gamma^B$ must enter $B$, i.e. there exists $e\in \Gamma^B$ such that $e\cap B\neq\emptyset$. We will show that
 \begin{equation}\label{huhho}
   \Gamma^B\cap \{e\in E^d|~e\cap B\neq\emptyset,~e\not\subset (\gamma\cup \gamma^*)\}=\emptyset.
 \end{equation}
 In fact, if such an edge exists, by $(\gamma$,$B)$--condition, then $$\sum_{\underset{e\cap B\neq\emptyset}{e\in\Gamma^B}}\tau_e^B\geq M^2.$$ On the other hand, since $\sum_{\underset{e\cap B\neq\emptyset}{e\in\Gamma^B}}\tau_e\leq M^2/2$, we obtain
 \begin{equation}\label{inf2}
   \begin{split}
     t^{+,B}(0,N\mathbf{e}_1)&=t^{+,B}(\Gamma^B)\\
     &\geq t^{+,B}(\Gamma^B[0,\text{st}(\Gamma^B,B)])+M^2 + t^{+,B}(\Gamma[\text{fin}(\Gamma^B,B),N\mathbf{e}_1])\\
     &\geq t^{+}(\Gamma^B[0,\text{st}(\Gamma^B,B)])+M^2/2 + t^{+}(\Gamma[\text{fin}(\Gamma^B,B),N\mathbf{e}_1])+2d\beta|B| \\
     &\geq t^+(\Gamma^B)\geq t^+(0,N\mathbf{e}_1),
  \end{split}
 \end{equation}
 which contradicts \eqref{inf1}.\\
 
 Therefore, since $\Gamma^B$ is self--avoiding, \eqref{huhho} implies that $a,b\in \Gamma^B$ and there exists a path $\gamma':a\to b$ with $\gamma'\subset \gamma\cup \gamma^*$ such that $(z_0,\cdots,z_{|\gamma|})={}\Gamma^B[a,b]$. Since $\Gamma^B$ is an arbitrary optimal path, it yields that $B$ is $\mathbb{G}$--turn for $\tau^B$.  \\

 Hereafter we suppose that $F^+<\infty$. Then since $\Gamma$ cross $B$ and $B$ is black, we have $|a-b|_1\geq{}(F^-+\delta_1)n/(2F^++1)$ and $t^+(a,b)\geq{}(F^-+\delta_1)|a-b|_1$. By (5) and (7) of \eqref{gam}, we obtain
$$\sharp\{x\in\gamma|~\text{$x$ is turn for $\gamma$}\}\leq 16d\sqrt{n}\text{ and }\sharp\{e\in E^d|~e\in \gamma\}\leq |a-b|_1+100d\sqrt{n} .$$
 It follows from the $(\gamma$,$B)$--condition that
  \begin{equation}
    \begin{split}
      t^B(\gamma)&\leq F^-_M(|a-b|_1+100d\sqrt{n})+2\alpha\sharp\{x\in\gamma|~\text{$x$ is turn for $\gamma$}\} \\
      &\leq F^-_M(|a-b|_1+100d\sqrt{n})+ 32\alpha d\sqrt{n}.
       \end{split}
 \end{equation}
 This yields that 

 \begin{equation}\label{finish}
   \begin{split}
     t^+(\Gamma^B)-t^{B,+}(\Gamma^B)&\geq t^+(0,N\mathbf{e}_1)-t^{B,+}(0,N\mathbf{e}_1)\\
     &\geq{}t^+(\Gamma)-t^{B,+}(\Gamma[0,a]\oplus \gamma \oplus\Gamma[b,N\mathbf{e}_1])\\
     &\geq{}t^+(a,b)-t^B(\gamma)-\beta(\sharp\{x\in\gamma|~\text{$x$ is turn for $\gamma$}\}+2)\geq{}\delta_1 n/4.
     \end{split}
 \end{equation}
 On the other hand, since $\tau_e\leq{}\tau^B_e$ unless $e\in\gamma\cup\gamma^*$,
 \begin{equation}\label{finish2}
   \begin{split}
     t^+(\Gamma^B)-t^{B,+}(\Gamma^B)\leq 2F^+\cdot 16d\sqrt{n}+F^+\sharp\{e\in E^d|~e\in (\gamma\cup\gamma^*) \cap \Gamma^B\}+2d\beta|B|.
     \end{split}
 \end{equation}

 Comparing \eqref{finish} with \eqref{finish2}, taking $n$ sufficiently large, we have
 $$\sharp\{e\in E^d|~e\in (\gamma\cup\gamma^*) \cap \Gamma^B\}\geq{}\delta_1 n/(4F^+)-32d\sqrt{n}-2d\beta|B|>\delta_1 n/(8F^+).$$ In particular, there exist $p,q \in \N\cup\{0\}$ such that $8d\sqrt{n}<p<q<|\gamma|-8d\sqrt{n}$, $|p-q|\geq{}8d\sqrt{n}$ and $x_p,x_q\in \Gamma^B$. To end the proof of the first inequality, we will use the following lemma to control $\Gamma^B$ not detouring from $\gamma$ often.
 \begin{lem}\label{lem:detour}
   Consider $4d\sqrt{n}\leq p_1\leq{}|\gamma|-4d\sqrt{n}$ and $0\leq q_1\leq |\gamma|$. Then under the above condition, for any self avoiding path $(y_0,\cdots,y_l)$ which satisfies $y_0\in\{x_{p_1},x_{p_1}^*\}$, $y_l\in\{x_{q_1},x_{q_1}^*\}$, $y_i\notin(\gamma\cup \gamma^*)$ for $1\leq i\leq l-1$,
   \begin{equation}\label{bonbon}
     t^{B,+}(y_0,\cdots,y_l)>{}t^{B,+}(\{y_0,x_{p_1+1}\}\oplus\gamma[x_{p_1+1},x_{q_1-1}]\oplus\{x_{q_1-1},y_l\})+2\beta.
   \end{equation}
    \end{lem}
 Note that $2\beta$ in \eqref{bonbon} is necessary because $x_{p_1}$ and $x_{q_1}$ may be $\mathbb{G}$--turn for $\Gamma^B$

 \begin{proof}
   Without loss of generality, we suppose that $p_1< q_1$. If $|p_1-q_1|>4dn^{1/3}$, by blackness of $B$ and (2) and (7) of \eqref{gam}, we have
\begin{equation}
\begin{split}
  t^{B,+}(y_0,x_{p_1+1},\cdots,x_{q_1-1},y_l)+2\beta&<2(2\alpha+\beta)\{(q_1-p_1)(n/2)^{-1/2}+1\}+F^-_M(q_1-p_1)\\
  &<(F^-+\delta_1)|x_{p_1}-x_{q_1}|_1\leq t^{B,+}(y_0,\cdots,y_l).
    \end{split}
\end{equation}
Next we assume $|p_1-q_1|\leq{}4dn^{1/3}$. Note that $l\ge 2$. If $l=2$ and $x_{p_1+1}$ is turn for $\gamma$, $y_1$ must be an element of $\{x_{p_1+1},x_{p_1+1}^*\}$, which is a contradiction. On the other hand, if $l=2$ and $x_{p_1+1}$ is not turn for $\gamma$, we have
$$t^{B,+}(y_0,x_{p_1+1},y_l)+2\beta\le \alpha+F^-_M+2\beta<2F^+_M\le t^{B,+}(y_0,\cdots,y_l),$$
as desired. We suppose that $l\ge 3$. Note that by (4), (6), (7) of \eqref{gam},
   $$(4dn^{1/3}\land l)F^+_M\leq{}t^{B,+}(y_0,\cdots,y_l),$$
   $$t^{B,+}(y_0,x_{p_1+1},\cdots,x_{q_1-1},y_l)\leq 2\alpha+\beta(q_1-p_1)+F^-_M(q_1-p_1-2)+2(\alpha-F^-_M)\mathbb{I}_{\{l>4\}}.$$
  Since $4dn^{1/3}\land l\geq{}|x_{q_1}-x_{p_1}|_1=q_1-p_1$ by (1) of \eqref{gam}, by using \eqref{estimate}, we obtain $${}t^{B,+}(y_0,x_{p_1+1},\cdots,x_{q_1-1},y_l)+2\beta<t^{B,+}(y_0,\cdots,y_l).$$
 \end{proof}
 By Lemma~\ref{lem:detour}, since $\Gamma^B$ is optimal, there exist $4d\sqrt{n}\le p_1<q_1\le |\gamma|-4d\sqrt{n}$ and a path $\gamma'=(y_i)^l_{i=0}$ with $y_0\in\{x_{p_1},x_{p_1}^*\}$, $y_l\in\{x_{q_1},x_{q_1}^*\}$ such that $y_{0},y_l\in\Gamma^B$, $|p_1-q_1|\ge 4d\sqrt{n}$, $\gamma'\subset \gamma[x_{p_1},x_{q_1}]\cup \{x_i^*|~p_1\le i\le q_1\}$ and $\gamma'=\Gamma^B[y_0,y_l]$. Then we have by (3) of \eqref{gam} that there exists $i\in \{p_1+[\sqrt{n}],\cdots,q_1-[\sqrt{n}]\}$ such that $x_i$ is $\mathbb{G}$--turn for $\gamma$ and $(x_{i-1},x_i),(x_{i},x_{i+1})\in\Gamma^B$ or $(x_{i-1},x_i^*),(x_{i}^*,x_{i+1})\in\Gamma^B$. Without loss of generality, we suppose that $(x_{i-1},x_i)\in\Gamma^B$ and $(x_{i},x_{i+1})\in\Gamma^B$. Since $\Gamma^B$ is optimal and $\gamma'=\Gamma^B[y_0,y_l]$, by the $(\gamma,B)$--condition, $\Gamma^B$ never come back to $x_i^*$, which implies $x^*_i\notin \Gamma^B$. In particular $x_i$ is $\mathbb{G}$--turn for $\Gamma^B$. Since $\Gamma^B$ is arbitrary, the first inequality follows.

 Second inequality of \eqref{crucial2} follows from the fact that there exists $c>0$ such that for any $a,b\in\partial^+{}B$, $$\P(\text{$\tau^*$ satisfy $(\gamma$,$B)$-condition})\geq{}c.$$
 \end{proof}
 From \eqref{numb} and \eqref{crucial2}, we have
\begin{equation}
\begin{split}
  \E\left[\min\{\sharp{}\{x\in\Gamma|~\text{$x$ is $\mathbb{G}$--turn for $\Gamma$}\}|~\Gamma\in \mathbb{O}^+_N\}\right]&\geq{}\frac{1}{2d}\sum_{B^j(l;n):n\text{-box}}\P(B^j(l;n)\text{ is $\mathbb{G}$--turn})\\
  &\geq \frac{c}{2d}\sum_{B^j(l;n):n\text{-box}}\P(B^j(l;n)\text{ is gray})\\
&\geq \frac{c}{2d}\E[\sharp\{\text{distinct gray $n$-box $B$} \}]\geq{}cN,
  \end{split}
\end{equation}
where $2d$ appears because of the overlap of $n$-boxes. Thus the proof is completed.
\end{proof}
As a result, we have the following corollary.
\begin{cor}
  There exists $c>0$ such that 
  $$\P(\text{ for any optimal path $\Gamma\in\mathbb{O}_N$, the number of $\mathbb{G}$--turn points of $\Gamma$ is at least $cN$})\to 1.$$
\end{cor}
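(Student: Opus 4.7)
The plan is to extract the high-probability statement as a direct consequence of the almost-sure bound already obtained in the course of proving Theorem~\ref{thm:number}. The key observation is that Lemma~\ref{lem:exp2} combined with Kingman's subadditive ergodic theorem yields an almost-sure lower bound on the minimum number of $\mathbb{G}$--turns over $\Gamma\in\mathbb{O}_N$, which automatically upgrades to convergence in probability of the event described in the corollary.

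First, I would invoke Kingman's theorem for both $t(0,N\mathbf{e}_1)$ and $t^+(0,N\mathbf{e}_1)$ to obtain constants $\mu,\mu^+\ge 0$ with $N^{-1}t(0,N\mathbf{e}_1)\to \mu$ and $N^{-1}t^+(0,N\mathbf{e}_1)\to \mu^+$ almost surely. Taking expectations in the first inequality of \eqref{basic} and applying Lemma~\ref{lem:exp2}, one gets $\mu^+-\mu\ge \beta c>0$.

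Next, the second inequality of \eqref{basic} can be rewritten as
\[
  \beta^{-1}\bigl(t^+(0,N\mathbf{e}_1)-t(0,N\mathbf{e}_1)\bigr)\le \min\{\sharp\{x\in\Gamma\mid x\text{ is $\mathbb{G}$--turn for $\Gamma$}\}\mid \Gamma\in\mathbb{O}_N\}.
\]
Dividing by $N$ and letting $N\to\infty$ gives, almost surely,
\[
  \liminf_{N\to\infty} N^{-1}\min\{\sharp\{x\in\Gamma\mid x\text{ is $\mathbb{G}$--turn for $\Gamma$}\}\mid \Gamma\in\mathbb{O}_N\}\ge \beta^{-1}(\mu^+-\mu)=:c_0>0.
\]
This is precisely \eqref{basic2}, already invoked in the proof above.

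Finally, the almost-sure liminf bound implies that for any $c\in(0,c_0)$, the event that every $\Gamma\in\mathbb{O}_N$ has at least $cN$ many $\mathbb{G}$--turn points holds for all sufficiently large $N$ with probability one, and in particular its probability tends to $1$ as $N\to\infty$. There is no real obstacle here, since the statement is a direct probabilistic reformulation of the almost-sure bound derived in the proof of Theorem~\ref{thm:number}; the only care needed is to choose the constant $c$ strictly below $c_0=\beta^{-1}(\mu^+-\mu)$.
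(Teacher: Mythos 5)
Your proof is correct and follows exactly the implicit reasoning the paper invokes: Kingman's theorem plus Lemma~\ref{lem:exp2} give $\mu^+-\mu\ge\beta c>0$, and the second inequality of \eqref{basic} converts the almost-sure convergence of $N^{-1}(t^+(0,N\mathbf{e}_1)-t(0,N\mathbf{e}_1))$ into an almost-sure eventual lower bound on the minimum number of $\mathbb{G}$--turns, which in particular yields the stated convergence in probability. The paper simply states the corollary without writing out this argument, so your proposal supplies the missing (routine) details in the same spirit.
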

\begin{proof}[Proof of Theorem~\ref{sup:number:thm}]
  we first define the event as
$$A=\{\forall \Gamma\in\mathbb{O}_N\text{ such that }\sharp \Gamma\leq KN\}.$$
Then under the event $A$, we have $\sharp \mathbb{O}_N\leq (2d)^{KN}$, which yields that
$$\frac{1}{N}\log{\sharp \mathbb{O}_N}\le K\log{2d}.$$
On the other hand, by \eqref{fell} below, there exists $K>0$ such that for any $N>0$
$$\mathbb{P}(\exists \Gamma\in\mathbb{O}_N\text{ such that }\sharp \Gamma>  KN)\leq K N^{-2d}.$$
By the Borel--Cantelli lemma, we complete the proof of \eqref{sup:number}.
\end{proof}
\section{Proof of Theorem~\ref{thm:intersection}.}
We begin with the connection between the intersection and restricted union of optimal paths.
\begin{lem}\label{connection}
  For any $\alpha>F^-$, there exists $c>0$ such that for any $N\in\N$,
  $$\E\left[\sharp\left[\bigcap_{\Gamma\in\mathbb{O}_N}\Gamma\right]\right]\ge c\E\left[\sharp \left\{\eta\in E(\Z^d)|~\exists \Gamma \in \mathbb{O}_N,~\eta\in\Gamma,~\tau_{\eta}>\alpha\right\}\right].$$
\end{lem}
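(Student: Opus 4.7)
My plan is to use the single-edge resampling technique of van den Berg--Kesten. The heart of the argument is that if $\eta$ is an edge with $\tau_\eta>\alpha$ that lies on at least one optimal path, then replacing $\tau_\eta$ by any value strictly less than $\alpha$ makes $\eta$ pivotal, i.e.\ it lies on every optimal path of the modified environment. Coupling this pointwise observation with an independent resample of $\tau_\eta$ converts it into a probability inequality which, summed over $\eta\in E^d$, gives the desired bound on expectations.

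In more detail, for a fixed edge $\eta$ I would introduce the infimum $T_\eta$ of $t(\Gamma)$ over self-avoiding paths from $0$ to $N\mathbf{e}_1$ that use $\eta$, and the corresponding infimum $T^\eta$ over paths that avoid $\eta$. Then $t(0,N\mathbf{e}_1)=T_\eta\wedge T^\eta$, and the event $\{\exists\,\Gamma\in\mathbb{O}_N,\,\eta\in\Gamma\}$ is exactly $\{T_\eta\le T^\eta\}$. Because each self-avoiding path uses $\eta$ at most once, replacing $\tau_\eta$ by $\tilde\tau_\eta<\tau_\eta$ makes $T_\eta$ drop by exactly $\tau_\eta-\tilde\tau_\eta$ while leaving $T^\eta$ unchanged. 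Hence, on the event $\{\tau_\eta>\alpha,\ T_\eta\le T^\eta\}\cap\{\tilde\tau_\eta<\alpha\}$, after the modification $T_\eta<T^\eta$ strictly, and $\eta$ belongs to every optimal path in the new environment.

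To convert this into the stated inequality, I fix $s\in(F^-,\alpha)$; since $s>F^-$, the quantity $p:=\P(\tau_e\le s)$ is strictly positive, which is precisely where the hypothesis $\alpha>F^-$ enters. Letting $\tilde\tau_\eta$ be independent of $\tau$ with the same law as $\tau_\eta$, and $\tilde\tau$ be $\tau$ with its $\eta$-coordinate replaced by $\tilde\tau_\eta$, we have $\tilde\tau\stackrel{d}{=}\tau$. The pivotality step then yields
\[
\P\Big(\eta\in\bigcap_{\Gamma\in\mathbb{O}_N}\Gamma\Big)\ \ge\ p\cdot\P\big(\exists\,\Gamma\in\mathbb{O}_N,\,\eta\in\Gamma,\,\tau_\eta>\alpha\big),
\]
and summing over $\eta\in E^d$ together with Fubini gives the lemma with constant $c=p$. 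The only real (and mild) obstacle is the self-avoidance bookkeeping in the pivotality step; the rest is routine, and the usefulness of $F$ that is in force in the applications of this lemma ensures almost-sure existence of optimal paths so that all events above are well defined.
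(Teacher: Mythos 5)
Your proposal is correct and uses essentially the same single-edge resampling argument as the paper: the paper resamples $\tau_\eta$ to a value $\le\alpha$ (giving constant $F(\alpha)$) rather than to a value $\le s$ for some $s\in(F^-,\alpha)$, and phrases the pivotality step directly in terms of the modified passage time $t^{(\eta)}$ rather than via $T_\eta$ and $T^\eta$, but these are purely cosmetic differences.
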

\begin{proof}
Let $\tau^*$ be an independent copy of $\tau$. Given an edge $\eta\in E(\Z^d)$, we set $\tau^{(\eta)}$ as
   $$\tau^{(\eta)}_{e}=  \begin{cases}
    \tau_{e}^* & \text{if $e=\eta$.}\\
  \tau_{e} & \text{if $e\neq \eta$.}
 \end{cases}$$
 Note that for any edge $\eta\in E(\Z^d)$, since $\tau$ and $\tau^{(\eta)}$ have same distributions,
\begin{equation}\label{ship}
  \begin{split}
    &\P\left(\exists \Gamma \in \mathbb{O}_N,~\eta\in\Gamma,~\tau_{\eta}>\alpha\right)\P\left(\tau^*_{\eta}\leq \alpha\right)\\
    &=\P\left(\exists \Gamma \in \mathbb{O}_N,~\eta\in\Gamma,~\tau_{\eta}>\alpha,~\tau^{(\eta)}_{\eta}\leq \alpha \right)\\
    &\leq \P\left(\forall \Gamma \in \mathbb{O}^{(\eta)}_N,~\eta\in\Gamma\right)= \P\left(\forall \Gamma \in \mathbb{O}_N,~\eta\in\Gamma\right),
  \end{split}
\end{equation}
where $\mathbb{O}^{(\eta)}_N$ is the set of all optimal paths from the origin to $N\mathbf{e}_1$ with respect to $\tau^{(\eta)}$. Indeed, if there exists $\Gamma \in \mathbb{O}_N$ such that $\eta\in\Gamma,~\tau_{\eta}>\alpha,~\tau^{(\eta)}_{\eta}\leq \alpha$, since $t^{(\eta)}(0,N\mathbf{e}_1)<t(0,N\mathbf{e}_1)$ and $t^{(\eta)}(\Gamma)<t(\Gamma)$ for any path $\Gamma$ with $e\notin\Gamma$, optimal paths for $\tau^{(\eta)}$ must pass through $\eta$. Therefore the inequality of \eqref{ship} follows.  Thus we have
\begin{equation}\label{impor}
  \begin{split}
    \E\left[\sharp\left[\bigcap_{\Gamma\in\mathbb{O}_N}\Gamma\right]\right]&= \sum_{\eta\in E(\Z^d)}\P\left(\forall \Gamma \in \mathbb{O}_N,~\eta\in\Gamma\right)\\
    &\geq \P\left(\tau^*_{\eta}\leq \alpha\right) \sum_{\eta\in E(\Z^d)}\P\left(\exists \Gamma \in \mathbb{O}_N,~\eta\in\Gamma,~\tau_{\eta}>\alpha\right)\\
    &=F(\alpha)\E\left[\sharp \left\{\eta\in E(\Z^d)|~\exists \Gamma \in \mathbb{O}_N,~\eta\in\Gamma,~\tau_{\eta}>\alpha\right\}\right],
  \end{split}
\end{equation}
where $F(\alpha)=\P(\tau_e\le \alpha)$.
\end{proof}
Next we show that there exist $\alpha>F^-$ and $c>0$ such that for any $N\in\N$
\begin{equation}\label{number}
  \begin{split}
    \E[\sharp\left\{ \eta \in E(\Z^d)|~\exists \Gamma\in\mathbb{O}_N\text{ such that }\eta \in \Gamma,~\tau_{\eta}>{}\alpha\right\}]\geq cN.
\end{split}
 \end{equation}
In fact, if we take $\alpha>F^-$ with $\P(\tau_e>\alpha)>0$ and $\tilde{\tau}_e=\tau_e+\mathbb{I}_{\{\tau_e>\alpha\}}$, the result of \cite{BK} leads us to that
$$\lim_{N\to\infty}N^{-1}\E[t(0,N\mathbf{e}_1)]<\lim_{N\to\infty}N^{-1}\E[\tilde{t}(0,N\mathbf{e}_1)],$$
where $\tilde{t}(\cdot,\cdot)$ is the first passage time with respect to $\tilde{\tau}$. Since $$\tilde{t}(0,N\mathbf{e}_1)\leq t(0,N\mathbf{e}_1)+\sharp\left\{ \eta \in E(\Z^d)|~\exists \Gamma\in\mathbb{O}_N\text{ such that }\eta \in \Gamma,~\tau_{\eta}>{}\alpha\right\},$$
 we have \eqref{number}. Combining it with Lemma~\ref{connection}, we have Theorem~\ref{thm:intersection}.
\section{Proof of Theorem~\ref{thm:union}}
We take $\alpha>F^-$ arbitrary. By \eqref{impor}, we have
\begin{equation}\label{impor2}
  \begin{split}
    &\E\left[\sharp \left\{e\in E(\Z^d)|~\exists \Gamma \in \mathbb{O}_N,~e\in\Gamma,~\tau_e>\alpha\right\}\right]\leq  F(\alpha)^{-1}\E\left[\min\{\sharp\Gamma|~\Gamma \in \mathbb{O}_N\}\right]\leq CN,
  \end{split}
\end{equation}
where we have used \eqref{length-opt} below in the last inequality. The rest of the proof is similar to that of Theorem~2 in \cite{Zhang06} except for Lemma~\ref{00}. We take $\alpha>F^-$ so that $\P(\tau_e>\alpha)>0$. Let $K_{N}=\left\{e\in E(\Z^d)|~\exists \Gamma \in \mathbb{O}_N,~e\in\Gamma,~\tau_e>\alpha\right\}$, and $R_N=\bigcup_{\Gamma\in\mathbb{O}_N}\Gamma$.
By \eqref{impor2}, for sufficiently large $M>0$,
\begin{equation}\label{5.2}
  \begin{split}
    \E\sharp R_N &\leq  \E\left[\sharp R_N; \sharp R_N \geq M\sharp K_N\right] + MCN\\
      &\leq \E[(\sharp R_N)^2]^{1/2}(\P(\sharp R_N \geq M\sharp K_N))^{1/2} + MCN
  \end{split}
\end{equation}
Now we will estimate $\P(\sharp R_N \geq M\sharp K_N)$. Given $k\in\Z$  and $u\in \Z^d$, we define the square $B_k(u)$ whose size is $k$ and corner is $ku$ and the fattened $\hat{R}_N$ by
$$B_k(u)=\prod^d_{i=1}[ku_i,ku_i+ k),$$
  $$\hat{R}_N(k)=\{u\in \Z^d: B_k(u)\cap R_N \neq\emptyset \}.$$
  Note that $\hat{R}_N(k)$ is connected and contains the origin. By our definition,
\begin{equation}\label{5.3}
  \begin{split}
\sharp \hat{R}_N(k)\geq \sharp R_N(k)/k^d.
  \end{split}
\end{equation}
Since $\sharp R_N\geq N$ and \eqref{5.3},
\begin{equation}\label{5.4}
  \begin{split}
    \P(\sharp R_N \geq M\sharp K_N)=\sum_{m\geq{}N/k^d}\P(\sharp R_N \geq M\sharp K_N,~\sharp \hat{R}_N(k) =m).
  \end{split}
  \end{equation}
Denote by $\bar{B}_k(u)$ the vertex set of $B_k(u)$ and all of its neighbor cubes with respect to $|\cdot|_{\infty}$. A cube $B_k(u)$ is said to be bad if $\bar{B}_k(u)\cap K_N\neq\emptyset$ and $u\in\hat{R}_N(k)$. Otherwise, the cube is said to be good. Let $\mathcal{B}_k(u)$ be the event that $B_k(u)$ is bad and $D_N$ be the number of bad cubes $B_k(u)$ for $u\in  \hat{R}_N(k)$. The following lemma corresponds to (5.7) of \cite{Zhang06}. 
\begin{lem}
  On $\{\sharp R_N \geq M\sharp K_N,~\sharp \hat{R}_N(k) =m\}$ for $m\geq{}N/k^d$, if $(8k)^d<M$,
  
\begin{equation}\label{5.6}
  \begin{split}
    D_{N}\geq{}m/2
  \end{split}
\end{equation}
\end{lem}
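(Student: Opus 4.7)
The plan is a purely combinatorial overcounting argument. I will bound above the number of cubes $u\in\hat R_N(k)$ with $\bar B_k(u)\cap K_N\ne\emptyset$ using the smallness of $\sharp K_N$ enforced by the event, and then pass to the complement in $\hat R_N(k)$ to conclude $D_N\ge m/2$.

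\emph{Step 1 (local overcount).} Since $\bar B_k(u)$ is the union of the $3^d$ cubes $B_k(v)$ with $d_\infty(v,u)\le 1$, a fixed edge $e=\langle x,y\rangle$ meets $\bar B_k(u)$ exactly when $u$ lies in the $d_\infty$-unit neighborhood of the $k$-cube containing $x$ or of the one containing $y$. These two $k$-cubes coincide or are lattice-adjacent, so the union of their unit neighborhoods has at most $4\cdot 3^{d-1}$ elements. Hence
\[
  \sharp\{u\in\Z^d:\bar B_k(u)\cap K_N\ne\emptyset\}\;\le\;4\cdot 3^{d-1}\,\sharp K_N.
\]

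\emph{Step 2 (edge count).} Assign each edge of $R_N$ to the $k$-cube containing its lexicographically smaller endpoint. Every vertex is the lex-smaller endpoint of exactly $d$ edges (one per positive coordinate direction), so each cube receives at most $dk^d$ edges, and the receiving cube automatically belongs to $\hat R_N(k)$. Summing yields $\sharp R_N\le dk^d\,m$, and the event hypothesis $\sharp R_N\ge M\sharp K_N$ then gives $\sharp K_N\le dk^d\,m/M$. Plugging into Step 1 and using $(8k)^d<M$,
\[
  \sharp\{u\in\hat R_N(k):\bar B_k(u)\cap K_N\ne\emptyset\}\;\le\;\frac{4\cdot 3^{d-1}dk^d}{M}\,m\;<\;\frac{4\cdot 3^{d-1}d}{8^d}\,m\;\le\;\frac{m}{2},
\]
the final arithmetic inequality being equivalent to $(8/3)^{d-1}\ge d$, which holds for every $d\ge 2$ (it reads $8/3\ge 2$ at $d=2$ and grows exponentially thereafter). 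Taking the complement in $\hat R_N(k)$ leaves at least $m/2$ cubes, which yields $D_N\ge m/2$.

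The argument is entirely deterministic on the prescribed event — no probability or independence input is required. The only mild subtlety is keeping the combinatorial constants sharp enough: replacing $4\cdot 3^{d-1}$ by the naive $2\cdot 3^d$ (two endpoints times the $3^d$ size of a $d_\infty$-unit neighborhood) or $dk^d$ by the naive $2dk^d$ (all incident edges rather than outgoing ones) would each cost a factor of two and cause the arithmetic to fail exactly at the critical case $d=2$.
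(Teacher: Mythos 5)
Your proof is correct, and it is essentially the same combinatorial route the paper takes: (i) overcount the cubes $u$ with $\bar B_k(u)\cap K_N\ne\emptyset$ in terms of $\sharp K_N$, (ii) bound $\sharp K_N\le \sharp R_N/M$ using the event, (iii) bound $\sharp R_N$ in terms of $m=\sharp\hat R_N(k)$, and (iv) conclude. The paper argues by contradiction; you argue directly, which is slightly cleaner.

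Where you genuinely improve on the paper is in the bookkeeping, and the improvement is not merely cosmetic. The paper's step (iii) relies on the stated inequality \eqref{5.3}, $\sharp\hat R_N(k)\ge \sharp R_N/k^d$, i.e.\ $\sharp R_N\le k^d m$. But $\sharp R_N$ is an \emph{edge} count, while a $k$-cube $B_k(u)$ contains only $k^d$ \emph{vertices}; the correct box bound is $\sharp R_N\le dk^d m$ (your lex-smaller assignment gives exactly this, tightly), so \eqref{5.3} is off by a factor of $d$. With the paper's looser overcount $4^d$, plugging in the corrected $dk^d$ yields only $8^{-d}Mm> k^dm$ versus $\sharp R_N\le dk^dm$, which fails to produce a contradiction at $d=2$. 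Your sharper overcount $4\cdot 3^{d-1}$ (instead of $4^d$) exactly absorbs the extra $d$: the final inequality reduces to $(8/3)^{d-1}\ge d$, which holds with a little room at $d=2$ and grows thereafter. So your version repairs a small slip in the paper while staying within the same argument.

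One thing worth flagging: the paper's literal definition of ``bad'' (namely $\bar B_k(u)\cap K_N\ne\emptyset$ and $u\in\hat R_N(k)$) must be a typo, since it is inconsistent with the paper's own proof of this lemma, with the proof of Lemma~\ref{00} (which produces an optimal subpath all of whose edges satisfy $\tau_e\le\alpha$, i.e.\ $\bar B_k(u)\cap K_N=\emptyset$), and with the way $\mathcal{B}_k(u)$ is used in \eqref{5.10}. Your closing sentence ``taking the complement in $\hat R_N(k)$ leaves at least $m/2$ cubes, which yields $D_N\ge m/2$'' silently, and correctly, adopts the intended reading in which a bad cube is one for which $\bar B_k(u)\cap K_N=\emptyset$; it would be worth making that swap explicit so the step from the counting bound to the conclusion about $D_N$ is airtight.
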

\begin{proof}
  If there are $m/2$ good cubes, $\sharp K_N\geq{}m/(2\cdot 4^d)\geq 8^{-d}m,$ where $4^d$ appears because of the overlap of cubes. By \eqref{5.3}, on $\{\sharp R_N \geq M\sharp K_N,~\sharp \hat{R}_N(k) =m\}$,
 \begin{equation}\label{5.7}
   \begin{split}
     \sharp R_N\geq M \sharp K_N\geq{}8^{-d}Mm={}8^{-d}M \sharp \hat{R}_N(k)>  \sharp \hat{R}_N k^d,
  \end{split}
 \end{equation}
 which is a contradiction of \eqref{5.3}.
\end{proof}
Thus we take $M>(8k)^d$. Then
\begin{equation}\label{5.9}
  \begin{split}
    \P(\sharp R_N\geq{}M\sharp K_N)&=\sum_{m\geq{}N/k^d}\P(\sharp R_N \geq M\sharp K_N,~\sharp \hat{R}_N(k) =m,~D_N\geq{}m/2)\\
    &= \sum_{m\geq{}N/k^d}\sum_{\kappa_m}\P(\sharp R_N \geq M\sharp K_N,~\hat{R}_N(k) =\kappa_m,~D_N\geq{}m/2),
  \end{split}
\end{equation}
where $\kappa_m$ is a connected subset of $\Z^d$ with $m$ vertices which contains the origin and the second sum is taken over all possible such $\kappa_m$.
\begin{lem}\label{00}
  Suppose that $F$ is useful and there exists $\ell>2(d-1)$ such that $\E[\tau_e^\ell]<\infty$. If $0\notin \bar{B}_k(u)$ and $N\mathbf{e}_1\notin \bar{B}_k(u)$,
 $$\P(\mathcal{B}_k(u))\to 0\hspace{7mm}\text{as $k\to\infty$}.$$
\end{lem}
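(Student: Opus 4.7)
The plan is to run a van den Berg--Kesten resampling argument in the spirit of Lemma~\ref{ineq}. Let $\tilde B$ be the cube of side $3k$ concentric with $\bar B_k(u)$; by the hypothesis $0,N\mathbf{e}_1\notin\bar B_k(u)$, for $k$ larger than a fixed constant one has $0,N\mathbf{e}_1\notin\tilde B$. Fix $\alpha'\in(F^-,\alpha)$ with $F(\alpha')>0$, let $\tau^*$ be an independent copy of $\tau$, and set $\tau^{\tilde B}_e=\tau^*_e$ for $e\cap\tilde B\neq\emptyset$ and $\tau^{\tilde B}_e=\tau_e$ otherwise. By the i.i.d.\ assumption, $t^{\tilde B}(0,N\mathbf{e}_1)$ and $t(0,N\mathbf{e}_1)$ have the same law.

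On $\mathcal{B}_k(u)$, select an optimal path $\Gamma$ of $\tau$ containing an edge $e_0\in\bar B_k(u)$ with $\tau_{e_0}>\alpha$. A short detour around $e_0$, analogous to the $\G$-turn construction used in the proof of Lemma~\ref{ineq}, yields a path differing from $\Gamma$ only in an $O(1)$ neighborhood of $e_0$; consider the ``good'' resampling event $\mathcal G$ that the $O(1)$ edges of this detour all satisfy $\tau^*_e\leq\alpha'$, a positive-probability condition on a bounded number of edges. On $\mathcal{B}_k(u)\cap\mathcal G$, the detour under $\tau^{\tilde B}$ is strictly cheaper than the segment of $\Gamma$ through $e_0$ by at least $\alpha-C\alpha'$, for a constant $C$ depending only on the detour's length; splicing then gives $t^{\tilde B}(0,N\mathbf{e}_1)\leq t(0,N\mathbf{e}_1)-(\alpha-C\alpha')$. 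Choosing $\alpha'$ small enough, the saving $\eta:=\alpha-C\alpha'>0$ is positive, so by the exchangeability between $\tau$ and $\tau^*$,
\[
\eta\,c_0\,\P(\mathcal{B}_k(u))\leq \E\bigl[(t-t^{\tilde B})_+\bigr]=\E\bigl[(t^{\tilde B}-t)_+\bigr],
\]
with $c_0:=\P(\mathcal G)>0$ independent of $k$.

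It remains to show $\E[(t^{\tilde B}-t)_+]\to 0$ as $k\to\infty$. Note that $t=t^{\tilde B}$ whenever no optimal path of $\tau$ uses any edge in $\tilde B$, and on the complementary event $|t-t^{\tilde B}|$ is bounded by the expected fluctuation of a crossing of $\tilde B$---of order $o(\sqrt k)$ under the moment hypothesis $\E[\tau_e^\ell]<\infty$ with $\ell>2(d-1)$, since this ensures $\E[\max_{e\cap\tilde B\neq\emptyset}\tau_e]=o(\sqrt k)$, matching the $k^{d-1}$ boundary count. Combined with a standard estimate of the probability that some optimal path crosses $\tilde B$ (kept small under the hypothesis on $u$), this yields the desired vanishing.

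The main obstacle is precisely this last step: decoupling the combinatorial length of optimal segments crossing $\tilde B$ from the maximum edge weight in $\tilde B$, in order to deduce that $\E[(t^{\tilde B}-t)_+]=o(1)$. This is where the moment threshold $\ell>2(d-1)$ is used in an essential way, and the resampling trick plays the role of Russo's formula in the Bernoulli argument of \cite{Zhang06}.
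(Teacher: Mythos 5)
The proposal misidentifies the event $\mathcal{B}_k(u)$. You take it to be the event that some optimal path uses an edge $e_0\in\bar B_k(u)$ with $\tau_{e_0}>\alpha$, reading the displayed definition of ``bad'' literally. But that reading cannot be what is intended: by Theorem~\ref{thm:intersection}, $\E\sharp K_N\gtrsim N$, so for $u$ near the $\mathbf{e}_1$-axis the probability that $\bar B_k(u)$ meets $K_N$ stays bounded away from zero and Lemma~\ref{00} would be false. The proof of the lemma in the paper, and the surrounding combinatorics (the lemma giving $D_N\geq m/2$ when $\sharp R_N\geq M\sharp K_N$), make clear that $\mathcal{B}_k(u)$ is the \emph{opposite} event: $B_k(u)$ meets $R_N$ while $\bar B_k(u)\cap K_N=\emptyset$, i.e.\ some geodesic segment crosses the annulus between $B_k(u)$ and $\bar B_k(u)$ using only edges with $\tau_e\leq\alpha$. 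Your resampling step --- cheaply detouring around a heavy edge --- targets the complement of this event and cannot bound it. Moreover the closing step, that $\E[(t^{\tilde B}-t)_+]\to 0$ as $k\to\infty$, is not correct on its own terms: enlarging the resampled box increases, not decreases, the typical perturbation, and the bound $\E[\max_{e\cap\tilde B\neq\emptyset}\tau_e]=o(\sqrt k)$ you invoke would require $\ell>2d$, not $\ell>2(d-1)$.

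The paper's argument does not resample at all. It reduces to showing that for any $a,b$ with $|a-b|_1\geq k$, the probability that some geodesic from $a$ to $b$ uses only edges with $\tau_e\leq\alpha$ is $O(k^{-2(d-1)-\epsilon})$; this event is exactly $\{\tilde t(a,b)=t(a,b)\}$ where $\tilde\tau_e=\tau_e+\mathbb{I}_{\{\tau_e>\alpha\}}$. Van den Berg--Kesten \cite{BK} gives $\E\tilde t(a,b)-\E t(a,b)\geq ck$, and Zhang's moment-based concentration inequality \cite{Zhang10} (this is precisely where $\E[\tau_e^\ell]<\infty$ with $\ell>2(d-1)$ enters) gives deviation bounds of order $k^{-2(d-1)-\epsilon}$ for both $t(a,b)$ and $\tilde t(a,b)$ at scale $ck/4$, hence $\P(\tilde t(a,b)=t(a,b))\leq Ck^{-2(d-1)-\epsilon}$. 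A union bound over the $O(k^{d-1})$ choices of $a$ on $\partial^- B_k(u)$ and $b$ on $\partial^+\bar B_k(u)$ then yields $\P(\mathcal{B}_k(u))\leq C'k^{2(d-1)}\cdot Ck^{-2(d-1)-\epsilon}\to 0$.
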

\begin{proof}
  It suffices to show that there exist constant $C,\epsilon>0$ such that for any  $a,b\in\Z^d$ with $|a-b|_1\geq{}k$,
\begin{equation}\label{zzz}
  \begin{split}
    \P(\text{$\exists$ optimal path $\Gamma$ for $t(a,b)$ such that $\forall e\in\Gamma$, $\tau_e\leq \alpha$})\leq Ck^{-2(d-1)-\epsilon}.
  \end{split}
\end{equation}
Indeed, if the event $\mathcal{B}_k(u)$ occurs, there exist $a\in\{v\in B_k(u)|~\exists w\notin B_k(u)~s.t.~|v-w|_1=1\}$, $b\in\{v\notin \bar{B}_k(u)|~\exists w\in \bar{B}_k(u)~s.t.~|v-w|_1=1\}$ and an optimal path $\Gamma$ for $t(a,b)$ such that $\forall e\in\Gamma$, $\tau_e\leq \alpha$. Since $\sharp[ \{v\in B_k(u)|~\exists w\notin B_k(u)~s.t.~|v-w|_1=1\}\lor\{v\notin \bar{B}_k(u)|~\exists w\in \bar{B}_k(u)~s.t.~|v-w|_1=1\}]\leq C'k^{d-1}$ with some constant  $C'=C'(d)$,
$$\P\left(\mathcal{B}_k(u)\right)\leq (C'k^{d-1})^2 Ck^{-2(d-1)-\epsilon}\to 0\hspace{4mm}\text{as $k\to\infty$}.$$

We will show \eqref{zzz}. From the result  (or the same argument) of \cite{BK}, if we set $\tilde{\tau}_e=\tau_e+\mathbb{I}_{\{\tau_e> \alpha\}}$, there exists a constant $c>0$ independent of $k$ such that $\E[\tilde{t}(a,b)]-\E[t(a,b)]\geq{}ck$. In addition,  if we take $\epsilon>0$ so that $2(d-1)+2\epsilon < \ell$, from Theorem~1 of \cite{Zhang10} with $m=2(d-1)+\epsilon$, there exist $C_1,\tilde{C}_1>0$ such that
\begin{equation}\label{conc}
  \begin{split}
\P(|t(a,b)-\E[t(a,b)]|>ck/4)&\leq (ck/4)^{-2m}\E[(t(a,b)-\E[t(a,b)])^{2m}]\\
&\leq C_1k^{-2(d-1)-2\epsilon}(\log{k})^{14(d-1)+14\epsilon}\leq C_1k^{-2(d-1)-\epsilon},
\end{split}
\end{equation}
and
\begin{equation}\label{conc1}
  \begin{split}
    \P(|\tilde{t}(a,b)-\E[\tilde{t}(a,b)]|>ck/4)\leq  \tilde{C}_1k^{-2(d-1)-\epsilon}.
    \end{split}
\end{equation}
Thus the left hand side of \eqref{zzz} can be bounded from above by
$$\P(\tilde{t}(a,b)=t(a,b))\leq (C_1+\tilde{C}_1)k^{-2(d-1)-\epsilon}.$$
  \end{proof}
Let $L>0$ to be chosen later. The above lemma yields that if we take $k$ sufficiently large, we have $\P(\mathcal{B}_k(u))\leq \exp{(-4L)}$ and in addition, if $m>4^{d+2}$,
\begin{equation}\label{5.10}
  \begin{split}
    \P(\sharp R_N \geq M\sharp K_N,~\hat{R}_N(k) =\kappa_m,~D_N\geq{}m/2)&\leq m\left(\begin{array}{c}
      m\\
      m/2\\
    \end{array}      \right)\exp{(-4L(m/2-2\cdot 4^d))}\\
    &\leq m\left(\begin{array}{c}
      m\\
      m/2\\
    \end{array}      \right)\exp{(-Lm)},
  \end{split}
\end{equation}
where $2\cdot 4^d$ appears because of the condition $0\notin \bar{\mathcal{B}}_k(u)$ and $N\mathbf{e}_1\notin \bar{\mathcal{B}}_k(u)$.\\

The following lemma appears in (4.24) in \cite{Grimm}. We skip the proof.
\begin{lem}
     There exists $C_1>0$ such that for any $m$,
  \begin{equation}\label{conn}
    \begin{split}
       \sharp \{\kappa_m\subset \Z^d|~0\in\kappa_m,~\kappa_m\text{ is connected ,}~|\kappa|=m\}\leq e^{C_1m}.
  \end{split}
\end{equation}
\end{lem}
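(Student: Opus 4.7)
The plan is to bound the number of connected subsets of $\Z^d$ of size $m$ containing the origin by exhibiting an injective encoding into a set of strings of length $O(m)$ over an alphabet of size at most $2d$. This is the classical lattice animal counting argument going back to Kesten and Grimmett.

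More concretely, given such a set $\kappa_m$, I would first fix a spanning tree $T(\kappa_m)$ of the induced subgraph on $\kappa_m$ (choosing, say, the lexicographically smallest spanning tree to make the choice canonical). Since $\kappa_m$ is connected and has $m$ vertices, $T(\kappa_m)$ has exactly $m-1$ edges. I would then perform a depth-first search on $T(\kappa_m)$ starting from the origin, breaking ties between children according to a fixed ordering of the $2d$ coordinate directions. The DFS traverses each edge of $T(\kappa_m)$ exactly twice (once descending, once backtracking), producing a walk of length $2(m-1)$ whose every step is one of the $2d$ unit vectors in $\pm\mathbf{e}_1,\ldots,\pm\mathbf{e}_d$.

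The key observation is that the walk uniquely determines $\kappa_m$: the set of vertices visited is exactly $\kappa_m$, and from the initial vertex $0$ one can reconstruct the walk step by step. Hence the map $\kappa_m \mapsto (\text{DFS walk})$ is injective into $\{\pm\mathbf{e}_1,\ldots,\pm\mathbf{e}_d\}^{2(m-1)}$, which has cardinality $(2d)^{2(m-1)}$. Therefore
$$\sharp \{\kappa_m\subset \Z^d\mid 0\in\kappa_m,\ \kappa_m\text{ connected},\ |\kappa_m|=m\}\leq (2d)^{2(m-1)}\leq e^{C_1 m}$$
with $C_1 = 2\log(2d)$, which gives the desired bound.

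There is no real obstacle here: the only subtlety is making sure the encoding is genuinely injective, which is guaranteed once the spanning tree and the ordering of children are chosen by a deterministic rule depending only on $\kappa_m$. If one prefers to avoid the DFS formalism, an equivalent proof uses the fact that a connected graph of $m$ vertices contains a walk of length $2(m-1)$ starting at any prescribed vertex and visiting every vertex, and counts such walks directly; both approaches give the same exponential bound $e^{C_1 m}$.
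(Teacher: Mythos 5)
Your proof is correct: the DFS traversal of a canonical spanning tree gives an injective encoding into $\{\pm\mathbf{e}_1,\ldots,\pm\mathbf{e}_d\}^{2(m-1)}$, yielding the bound $(2d)^{2(m-1)}\le e^{C_1 m}$. The paper itself skips the proof and cites (4.24) of Grimmett's \emph{Percolation}, where the argument is precisely this standard lattice-animal counting via a spanning-tree walk, so your approach matches the intended one.
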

Thus if we take $L$ sufficiently large and thus $k$ as well, we have
\begin{equation}\label{5.13}
  \begin{split}
    \P(\sharp R_N\geq{}M\sharp K_N)&\leq \sum_{m\geq{}N/k^d}m\left(\begin{array}{c}
      m\\
      m/2\\
    \end{array}      \right)\exp{(-Lm+C_1m)}\\
    &\leq \sum_{m\geq{}N/k^d} \exp{(-Lm/2)}\leq C_2 \exp{(-Lk^{-d}N/2)}, 
  \end{split}
\end{equation}
with some constant $C_2>0$.\\

Let us move on to the estimate of $\E[(\sharp R_N)^2]$.
\begin{lem}\label{exp}
  Suppose that $\E[\tau_e^2]<\infty$ and $F$ is useful. Then there exists $C_3>0$ such that
  $$\E[(\sharp R_N)^2]\leq C_3N^{2d}.$$
\end{lem}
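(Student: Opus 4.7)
The plan is a box-confinement argument: show that with high probability the union $R_N$ is contained in a box of side $O(N)$, giving the deterministic bound $\sharp R_N \leq CN^d$ on this event, and then handle the complementary event via a moment bound on $T := t(0,N\mathbf{e}_1)$.

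First, I would reduce to counting vertices. Setting $V_N := \bigcup_{\Gamma\in\mathbb{O}_N}\Gamma$ viewed as a vertex set, every edge of $R_N$ is incident to $V_N$, so $\sharp R_N \leq 2d\,|V_N|$. Each $v\in V_N$ lies on some optimal path, so $t(0,v)+t(v,N\mathbf{e}_1)\leq T$. By Lemma~\ref{useful}, $\P(t(0,v)<(F^-+\delta_1)|v|_1)\leq e^{-D|v|_1}$, and the analogous estimate holds for $t(v,N\mathbf{e}_1)$. A union bound over $v$ with $|v|_1\vee |v-N\mathbf{e}_1|_1\geq KN$ yields an event of probability at least $1-Ce^{-cN}$ on which every such $v$ satisfies $(F^-+\delta_1)(|v|_1+|v-N\mathbf{e}_1|_1)\leq t(0,v)+t(v,N\mathbf{e}_1)\leq T$.

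Second, I would bound $T$ using the second moment assumption. Comparing with the straight-line path $\Gamma_0$ of length $N$ and applying Minkowski's inequality, $\E T^2\leq (N\sqrt{\E\tau_e^2})^2=O(N^2)$, and Chebyshev gives $\P(T>K'N)\leq C/(K')^2$. On the intersection of the useful event above with $\{T\leq K'N\}$, for $K,K'$ large enough, we obtain $V_N\subset[-CN,CN]^d$, whence $\sharp R_N\leq CN^d$ and $(\sharp R_N)^2\leq CN^{2d}$.

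Finally, the main obstacle is controlling $\E[(\sharp R_N)^2;\,T>K'N]$. Here I would use the cruder bound $\sharp R_N\leq CT^d$, valid on the useful event (since $V_N$ then lies in the $\ell_1$-ball of radius $T/(F^-+\delta_1)$), decompose $\{T>K'N\}$ dyadically into $\{2^jK'N<T\leq 2^{j+1}K'N\}$, and bound each contribution by $C(2^jK'N)^{2d}\cdot\P(T>2^jK'N)$. Since the naive Chebyshev bound $\P(T>x)\leq CN^2/x^2$ is borderline for $d\geq 2$, one must either invoke Kesten's sharper variance estimate $\mathrm{Var}(T)\leq CN$ together with the useful property to upgrade the tail to a summable decay, or refine the geometric bound on $\sharp R_N$ by using that the geodesic tree cannot stray far beyond a thin tube even on the rare event that $T$ is atypically large; either tightening closes the dyadic sum at order $O(N^{2d})$.
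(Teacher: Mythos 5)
Your overall frame (confine $R_N$ to a box of side $O(N)$ using Lemma~\ref{useful}, pay $CN^d$ per unit probability, handle the tail of $T := t(0,N\mathbf{e}_1)$ separately) is in the spirit of the paper's argument, but the last step has a genuine gap that you yourself flag: the tail bound on $T$ is not nearly strong enough, and neither of the two fixes you gesture at actually closes it.

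Concretely, on the dyadic block $\{2^jK'N < T \leq 2^{j+1}K'N\}$ you pay $C(2^jK'N)^{2d}$, so you need $\P(T>x)$ to decay faster than $x^{-2d}$ (with the right power of $N$). The naive Chebyshev bound $\P(T>x)\leq CN^2/x^2$ gives a dyadic sum $\sim N^2\sum_j(2^jK'N)^{2d-2}$, which diverges for every $d\geq 2$ --- this is not ``borderline.'' Upgrading to Kesten's variance bound $\mathrm{Var}(T)\leq CN$ only improves this to $\P(T>x)\leq CN/x^2$ and the sum $\sim N\sum_j(2^jK'N)^{2d-2}$ still diverges for $d\geq 2$; a second-moment tail of $T$ simply cannot supply the $x^{-2d}$ decay one needs. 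The paper's actual device, which you are missing, is the classical Kesten trick of routing through $2d$ \emph{edge-disjoint} paths $r_1,\dots,r_{2d}$ from $0$ to $N\mathbf{e}_1$, each of length at most $N+8$: since $T\leq t(r_i)$ for every $i$ and the $t(r_i)$ are independent, one gets
\begin{equation*}
\P(T\geq AsN)\ \leq\ \prod_{i=1}^{2d}\P\bigl(t(r_i)\geq AsN\bigr)\ \leq\ \prod_{i=1}^{2d}\bigl((AsN/2)^{-2}(N+8)\,\E[\tau_e^2]\bigr)\ \leq\ D\,s^{-4d}N^{-2d},
\end{equation*}
and this $s^{-4d}$ rate makes the tail integral $\int_{KN}^\infty r^{2d-1}\,\P(\max_{\Gamma}\sharp\Gamma\geq r)\,dr$ converge to $O(N^{2d})$. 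Your alternative proposal (``refine the geometric bound'' via a thin tube on the rare event) would require a quantitative statement you do not provide and would be doing more work than necessary; the disjoint-paths improvement is the missing ingredient, and with it the rest of your argument (Kesten's Proposition~5.8 / Lemma~\ref{useful} to bound geodesic lengths, then $\sharp R_N\leq 2d(\max_\Gamma\sharp\Gamma)^d$) would go through essentially as in the paper.
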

\begin{proof}
 First we suppose that $F^-=0$. From Proposition 5.8 in \cite{Kes86}, there exist $A,B,C>0$ such that for any $r>0$
   \begin{equation}\label{5.8}
     \begin{split}
       \P\left( \exists\text{ selfavoiding path $\Gamma$ from the origin
         with $|\Gamma|\geq r$ and $t(\Gamma) < A r$}\right) < B \exp{(Cr)}.
     \end{split}
     \end{equation}
  We take $K>2\E[\tau_e]/A$. Then for any $s>K$,
  \begin{equation}\label{fin}
    \begin{split}
      &\P\left(\exists \Gamma\in\mathbb{O}_N\text{ such that }|\Gamma|\geq sN\right)\\
      &\leq \P\left(\exists \Gamma\in\mathbb{O}_N\text{ such that }|\Gamma|\geq sN\text{ and }t(0,N\mathbf{e}_1)< AsN\right)+\P\left(t(0,N\mathbf{e}_1)\geq AsN\right)\\
      &\leq B\exp{(-CsN)}+\P\left(t(0,N\mathbf{e}_1)\geq AsN\right).
  \end{split}
  \end{equation}
  where we have used \eqref{5.8} in the second inequality. Now we consider $2d$ disjoint paths from the origin to $N\mathbf{e}_1$ so that $\max\{|r_i||~i=1,\cdots,2d\}\leq N+8$ as in \cite[p 135]{Kes86}. By the Chebyshev inequality, we have that there exists $D=D(d,F,A)>0$ such that  
   \begin{equation}\label{fell}
     \begin{split}
       \P\left(t(0,N\mathbf{e}_1)\geq AsN\right)& \leq \prod^{2d}_{i=1} \P\left(t(r_i)\geq AsN\right)\\
       &\leq \prod^{2d}_{i=1} \P\left(|t(r_i)-\E[t(r_i)]|\geq AsN/2\right)\\
       &\leq \prod^{2d}_{i=1} \left((AsN/2)^{-2}(N+8) \E[\tau_e^2]\right)\leq Ds^{-4d}N^{-2d}. 
  \end{split}
   \end{equation}
   Thus we have for $s>K$,
  \begin{equation}\label{length-opt}
    \P\left(\exists \Gamma\in\mathbb{O}_N\text{ such that }|\Gamma|\geq sN\right) \leq 2Ds^{-4d}N^{-2d}.
    \end{equation}
   Since $\sharp R_N\leq 2d\left(\max_{\Gamma\in\mathbb{O}_N}\sharp\Gamma\right)^d$, there exists $C_3>0$ such that
    \begin{equation}\label{fel}
    \begin{split}
      \E\left[\left(\sharp R_N\right)^2\right]&\leq (2d)^2\E\left[\left(\max_{\Gamma\in\mathbb{O}_N}\sharp\Gamma\right)^{2d}\right]=  \left(2d\right)^{3}\int^\infty_{0}r^{2d-1}\P\left(\max_{\Gamma\in\mathbb{O}_N}\sharp\Gamma\geq r\right)dr\\
      &\leq \left(2d\right)^3 \left((KN)^{2d}+\int^\infty_{KN}r^{2d-1}\P\left(\max_{\Gamma\in\mathbb{O}_N}\sharp\Gamma\geq r\right)dr\right)\\
      &\leq \left(2d\right)^3 \left((KN)^{2d}+\int^\infty_{KN}r^{2d-1}\cdot 2D(r/N)^{-4d}N^{-2d}dr\right)\leq C_3N^{2d}.\\
  \end{split}
    \end{equation}
    
  When $F^->0$, since $\max_{\Gamma\in\mathbb{O}_N}\sharp\Gamma\leq t(0,N\mathbf{e}_1)/F^-,$ the proof is completed as before.
  \end{proof}
Comibining \eqref{5.2}, \eqref{5.13} and the above lemma, we complete the proof.

\section{Proof of Corollary \ref{cor1}.}
\begin{proof}[Proof of Corollary \ref{cor1}]
  As in the proof of Lemma~\ref{exp}, if we take $K>0$ as in the proof of Lemma~\ref{exp}, 
\begin{equation*}
  \begin{split}
    cN&\leq  \E\left[\sharp \left[\bigcap_{\Gamma\in\mathbb{O}_N}\Gamma\right]\right]\\
    &\le \E\left[\left(\sharp \left[\bigcap_{\Gamma\in\mathbb{O}_N}\Gamma\right]\right)^2\right]\P\left(\sharp \left[\bigcap_{\Gamma\in\mathbb{O}_N}\Gamma\right]> KN\right) +KN\P\left(cN/2\leq \sharp \left[\bigcap_{\Gamma\in\mathbb{O}_N}\Gamma\right]\leq KN\right)+cN/2\\
    & \le 1+KN\P\left(cN/2\leq\sharp \left[\bigcap_{\Gamma\in\mathbb{O}_N}\Gamma\right]\right)+cN/2.
\end{split}
 \end{equation*}
  Rearranging this, we have the conclusion.
\end{proof}
\section*{Acknowledgements}
The author would like to express his gratitude to Masato Takei for introducing him the idea of Theorem~2 in \cite{Zhang06}. He is also indebted to Hugo Duminil--Copin for introducing \cite{CKNPS} prior to its publication. This research is partially supported by JSPS KAKENHI 16J04042.

\end{document}